\documentclass{article}
\usepackage{mystyle}
\usepackage[cp1251]{inputenc}
\usepackage{amssymb}
\usepackage{amsmath}
\usepackage{latexsym}
\usepackage{amsthm}
\addtolength{\textwidth}{1cm}
\addtolength{\topmargin}{-4\baselineskip}
\addtolength{\textheight}{6\baselineskip}
\newtheorem {theorem}{Theorem}
\newtheorem {definition}{Definition}
\newtheorem {proposition}{Proposition}

\newtheorem*{MT}{Main Theorem}
\theoremstyle{remark}
\newtheorem*{remark}{Remark}

\DeclareMathOperator{\vol}{Vol}
\DeclareMathOperator{\area}{Area}
\DeclareMathOperator{\im}{Im}

\begin{document}
\title{Spectral properties of a family of minimal tori \\of revolution in five-dimensional sphere}
\author{Mikhail A. Karpukhin}
\date{} 
\maketitle
\begin{abstract}
The normalized eigenvalues $\Lambda_i(M,g)$ of the Laplace-Beltrami operator can be considered as functionals on the space of all Riemannian metrics $g$ on a fixed surface $M$. In recent papers several explicit examples of extremal metrics were provided. These metrics are induced by minimal immersions of surfaces in $\mathbb{S}^3$ or $\mathbb{S}^4$. In the present paper a family of extremal metrics induced by minimal immersions in $\mathbb{S}^5$ is provided.  \\
\textit{2010 Mathematics Subject Classification.} 58E11, 58J50.\\
\textit{Key words and phrases.} Extremal metric, minimal surface.
\end{abstract} 
\section*{Introduction}
Let $M$ be a closed surface and $g$ be a Riemannian metric on $M$. Then the Laplace-Beltrami operator $\Delta\colon C^\infty(M)\to C^\infty(M)$ is given by the formula
$$
\Delta f = -\frac{1}{\sqrt{|g|}}\frac{\partial}{\partial x^i}\bigl(\sqrt{|g|}g^{ij}\frac{\partial f}{\partial x^j}\bigr).
$$
The spectrum of $\Delta$ consists only of eigenvalues. Let us denote them by 
$$
0 = \lambda_0(M,g) < \lambda_1(M,g) \leqslant \lambda_2(M,g) \leqslant \lambda_3(M,g) \leqslant \ldots,
$$
where the eigenvalues are written with their multiplicities.

In this paper the family of functionals
$$
\Lambda_i(M,g) = \lambda_i(M,g)\area (M,g)
$$
is investigated.
Let us fix $M$. We are interested in the quantity $\sup \Lambda_i(M,g)$, where the supremum is taken over the space of all Riemannian metrics on $M$.

An upper bound for $\Lambda_1(M,g)$ in terms of the genus of $M$ was provided in the paper~\cite{YangYau} and later the existence of an upper bound for $\Lambda_i(M,g)$ was shown in the paper~\cite{Korevaar}.

Several recent papers~\cite{EGJ, ElSoufiIlias1, ElSoufiIlias2, Hersh, JNP, LiYau, Nadirashvili1, Nadirashvili2} deal with finding the exact values of this supremum in the space of all Riemannian metrics on several particular surfaces. We refer to the introduction to the paper~\cite{PenskoiOtsuki} for more details.

In attempt to solve this problem,  the following definition was introduced in several papers, see e.g.~\cite{ElSoufiIlias1,Nadirashvili1}.
\begin{definition} A Riemannian metric $g$ on a closed surface $M$ is called an extremal metric for a functional 
$\Lambda_i(M,g)$ if for any analytic deformation $g_t$ such that $g_0 = g$ the following inequality holds,
$$
\frac{d}{dt}\Lambda_i(M,g_t)\Bigl|_{t=0+} \leqslant 0 \leqslant \frac{d}{dt}\Lambda_i(M,g_t)\Bigl|_{t=0-}.
$$
\end{definition}
For the correctness of this definition we refer to the papers~\cite{BU,Berger,ElSoufiIlias2}. 

A real breakthrough in finding explicit examples of (smooth) extremal metrics became possible due to connection with the theory of minimal surfaces in spheres discovered in the paper~\cite{ElSoufiIlias2}.
Let $\psi\colon  M \looparrowright \mathbb{S}^n$ be a minimal immersion in the unit sphere. We denote by $\Delta$ the Laplace-Beltrami operator on $M$ associated with the metric induced by the immersion $\psi$. Let us introduce the Weyl's counting funcion
$$
N(\lambda) = \#\{i|\lambda_i(M,g)<\lambda\}.
$$
The following theorem provides a general approach to find smooth extremal metrics.
\begin{theorem}[El Soufi, Ilias~\cite{ElSoufiIlias2}] Let $\psi\colon M \looparrowright 
\mathbb{S}^n$ be a minimal
immersion of a surface in the unit sphere $\mathbb{S}^n$ endowed with the canonical metric $g_{can}$. Then the metric $\psi^*g_{can}$ on $M$ is extremal for the functional $\Lambda_{N(2)}(M,g)$.
\label{th1}
\end{theorem}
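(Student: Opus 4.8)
The plan is to deduce the statement from two ingredients: the classical identity of Takahashi relating minimal immersions in spheres to Laplace eigenfunctions, and a variational criterion that detects extremal metrics through their eigenfunctions. First I would record Takahashi's observation. Writing $\psi = (\psi_1,\dots,\psi_{n+1})$ for the components of the immersion in $\mathbb{R}^{n+1}\supset\mathbb{S}^n$, minimality together with $|\psi|\equiv 1$ forces $\Delta\psi_\alpha = 2\psi_\alpha$ for every $\alpha$, the factor $2$ being $\dim M$; thus each $\psi_\alpha$ is an eigenfunction of $\Delta = \Delta_{\psi^*g_{can}}$ for the eigenvalue $2$. Since $\psi$ is isometric and $g_{can}$ is the restriction of the Euclidean metric, the induced metric is exactly $g = \sum_\alpha d\psi_\alpha\otimes d\psi_\alpha$. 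Because $2$ is an eigenvalue, the definition of $N(\lambda)$ gives $\lambda_{N(2)}(M,g) = 2$, and the functions $\psi_\alpha$ lie in the eigenspace $E$ of this repeated eigenvalue whose lowest index is $N(2)$.

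Next I would set up the first variation. For an analytic family $g_t$ with $\dot g_0 = h$ I would invoke the standard perturbation formula (cf. Berger and El Soufi--Ilias): on the eigenspace $E$ of an eigenvalue $\lambda$ of multiplicity $m$, the one-sided derivatives of the branches issuing from $\lambda$ are the eigenvalues of the symmetric matrix $M_{jk} = -\int_M\langle h,\tau_{jk}\rangle\,dv_g$, where $\tau_{jk} = du_j\otimes du_k - \tfrac12(\langle\nabla u_j,\nabla u_k\rangle - \lambda u_j u_k)g$ for an $L^2$-orthonormal eigenbasis $\{u_j\}$. Differentiating the area factor contributes $\tfrac\lambda2\delta_{jk}\int_M\langle g,h\rangle\,dv_g$, so the branches of $\Lambda_{N(2)}(M,g_t)$ have one-sided derivatives equal to the eigenvalues of $\hat M_{jk} = -\int_M\langle h,\hat\tau_{jk}\rangle\,dv_g$ with $\hat\tau_{jk} = \tau_{jk} - \tfrac\lambda2\delta_{jk}g$. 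As $N(2)$ is the lowest index in the cluster, its left and right derivatives are the largest and smallest eigenvalues of $\hat M$, so the extremality inequality of Definition~1 becomes exactly the requirement that $\hat M$ be indefinite or zero for every $h$.

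The heart of the argument is to convert this requirement into a tensor identity. I would run a Hahn--Banach separation inside the finite-dimensional span of the tensors $\hat\tau_{jk}$: the matrix $\hat M$ fails to be sign-definite for all $h$ if and only if $0$ lies in the convex hull of $\{\hat\tau_u : u\in E,\ \|u\|_{L^2}=1\}$, that is, if and only if there exist eigenfunctions $v_1,\dots,v_p\in E$ with $\sum_k\hat\tau_{v_k} = 0$. Taking the trace of this identity yields $\sum_k v_k^2\equiv\mathrm{const}$; applying $\Delta$ to this constant and using $\Delta v_k = \lambda v_k$ then gives $\sum_k|\nabla v_k|^2\equiv\mathrm{const}$, after which the identity collapses to $\sum_k dv_k\otimes dv_k = c\,g$ for a positive constant $c$. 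Rescaling the $v_k$, extremality of $\Lambda_{N(2)}$ is thus equivalent to the existence of eigenfunctions in $E$ with $\sum_k dv_k\otimes dv_k = g$. The coordinate functions $\psi_\alpha$ furnish exactly such a family, which completes the proof.

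I expect the main obstacle to be the bookkeeping around the repeated eigenvalue: the branches are only Lipschitz rather than smooth, so the one-sided derivatives must be extracted as the extreme eigenvalues of $\hat M$ and matched to the correct (lowest) index $N(2)$, and the separation argument must be confined to the finite-dimensional span of the $\hat\tau_{jk}$ so that the convex hull is closed and the separating hyperplane can be realized by an actual deformation $h$.
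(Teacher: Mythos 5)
You should first note a structural point: the paper never proves Theorem~\ref{th1} at all --- it is quoted from El Soufi and Ilias~\cite{ElSoufiIlias2} and used as a black box, so there is no internal proof to compare against. What you have written is essentially a reconstruction of the argument in that reference (and in Nadirashvili's earlier work): Takahashi's theorem gives $\Delta\psi_\alpha = 2\psi_\alpha$ and hence $\lambda_{N(2)}=2$; analytic perturbation theory identifies the one-sided derivatives of $\Lambda_{N(2)}$ along a deformation with the extreme eigenvalues of a matrix built from the tensors $\tau_{jk}$; and the extremality condition is converted into the existence of $\lambda$-eigenfunctions $v_k$ with $\sum_k dv_k\otimes dv_k = g$, which the components of $\psi$ supply. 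This outline is correct, including the matching of the \emph{lowest} index $N(2)$ of the cluster to the pair (smallest eigenvalue, largest eigenvalue) of $\hat M$ as the (right, left) derivatives.

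Two points deserve repair. First, a normalization slip: the branch derivatives of $\Lambda=\lambda\cdot\area$ are the eigenvalues of $\area(g)\,M + \lambda\,\area'(0)\,\mathrm{Id}$, so your $\hat\tau_{jk}$ is missing a factor $\area(g)$ in front of $\tau_{jk}$. With your definition one computes $\sum_\alpha\hat\tau_{\psi_\alpha} = \bigl(1-\area(g)\bigr)g$, which vanishes only if $\area(g)=1$; this is harmless because $\Lambda_i$ and extremality are scale-invariant, so one may rescale to unit area, but the step should be stated since the theorem's hypothesis pins $g=\psi^*g_{can}$. Second, your argument proves more than is needed and thereby leans on machinery that can be discarded: the Hahn--Banach separation is required only for the converse implication (extremal $\Rightarrow$ existence of the family), which is not part of this theorem. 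For the direction actually used, one checks directly that $\sum_\alpha\psi_\alpha^2\equiv 1$ (automatic, since the image lies in $\mathbb{S}^n$), $\sum_\alpha|\nabla\psi_\alpha|^2\equiv 2$, hence $\sum_\alpha\hat\tau_{\psi_\alpha}=0$ after the area normalization; then for every $h$ the quadratic form $u\mapsto -\int_M\langle h,\hat\tau_u\rangle\,dv_g$ sums to zero over the $\psi_\alpha$, so $\hat M(h)$ cannot be definite of either sign (note ``indefinite or zero'' should read ``not sign-definite'': semi-definite with kernel is also allowed), and the two one-sided derivative inequalities of Definition~1 follow. With these corrections your proposal is a complete and correct proof of the cited theorem.
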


In recent papers~\cite{Karpukhin2, Lapointe,PenskoiLawson, PenskoiOtsuki} this connection was used to provide several examples of extremal metrics on the torus and the Klein bottle. These metrics were induced on these surfaces by minimal immersions in $\mathbb{S}^3$ and $\mathbb{S}^4$. In the present paper a family of minimally immersed surfaces in $\mathbb{S}^5$ is investigated. For any pair of positive integers $m,n$ such that $m\geqslant n$ and $(m,n) = 1$ we consider a doubly $2\pi$-periodic immersion 
$\varphi_{m,n}\colon \mathbb{R}^2 \to \mathbb{S}^5$, given by the formula   
\begin{equation}
\begin{split}
&\varphi_{m,n}(x,y) =\\ 
&\left(\sqrt{\frac{m+n}{2m+n}}e^{imy}\sin x, \sqrt{\frac{m+n}{m+2n}}e^{iny}\cos x,\sqrt{\frac{n\cos^2x}{m+2n} + \frac{m\sin^2x}{2m+n}}e^{-i(m+n)y}\right),
\end{split}
\label{phi}
\end{equation}
where $\mathbb{S}^5$ is considered as the set of unit length vectors in $\mathbb{C}^3$. We denote the image of $\varphi_{m,n}$ by $M_{m,n}$. To the best of author's knowledge, explicit formula~(\ref{phi}) first appeared in the introduction to the paper~\cite{Mironov}. This immersion can be obtained due to a general construction by Mironov (see the paper~\cite{Mironov2}). We should mention that $M_{m,n}$ were described in the conformal coordinates in the papers~\cite{Haskins, Joyce}. The main result of the present paper is the following theorem.
\begin{MT}
\label{MainTheorem}
For any pair of positive integers $m,n$ such that $m\geqslant n$ and $(m,n) = 1$ the immersion $\varphi_{m,n}$ is minimal. The corresponding surface $M_{m,n}$ is the  torus. If $mn\equiv 0 \mod 2$ then the metric induced on $M_{m,n}$ by the immersion is extremal for the functional $\Lambda_{4(m+n)-3}(\mathbb{T}^2,g)$. If $mn\equiv 1 \mod 2$ then the metric induced on $M_{m,n}$ by the immersion is extremal for the functional $\Lambda_{2(m+n)-3}(\mathbb{T}^2,g)$.
\end{MT}
The proof of this theorem is similar to the proof of the main theorem in the paper~\cite{PenskoiLawson} by Penskoi. Although, we should mention that the exposition here is much simplified, e.g. we do not use the theory of the Magnus-Winkler-Ince equation as opposed to~\cite{PenskoiLawson}. We also fill a gap by giving a rigorous proof of Proposition~20 from the paper~\cite{PenskoiLawson}. 

We provide the exact value of the corresponding functional in terms of elliptic integrals of the first and the second kind given respectively by formulae
$$
K(k) = \int\limits_0^1\frac{1}{\sqrt{1-x^2}\sqrt{1-k^2x^2}}\,dx,\qquad
E(k) = \int\limits_0^1\frac{\sqrt{1-k^2x^2}}{\sqrt{1-x^2}}\,dx.
$$ 

Following the paper~\cite{Karpukhin2} we also prove the non-maximality of the metric on $M_{m,n}$.
\begin{proposition}
If $mn\equiv 0 \mod 2$ then 
\begin{equation*}
\begin{split}
& \Lambda_{4(m+n)-3}(M_{m,n}) =\\
& 16\pi\left(\sqrt{m^2+2mn}\,E\left(\sqrt{\frac{m^2-n^2}{m^2+2mn}}\right) - \frac{mn}{\sqrt{m^2+2mn}}K\left(\sqrt{\frac{m^2-n^2}{m^2+2mn}}\right)\right).
\end{split}
\end{equation*}
If $mn\equiv 1 \mod 2$ then
\begin{equation*}
\begin{split}
& \Lambda_{2(m+n)-3}(M_{m,n}) = \\
& 8\pi\left(\sqrt{m^2+2mn}\,E\left(\sqrt{\frac{m^2-n^2}{m^2+2mn}}\right) - \frac{mn}{\sqrt{m^2+2mn}}K\left(\sqrt{\frac{m^2-n^2}{m^2+2mn}}\right)\right).
\end{split}
\end{equation*}
For every pair $\{m,n\}\ne\{1,1\}$ the metric on $M_{m,n}$ is not maximal for the corresponding functional.
\label{MainProposition}
\end{proposition}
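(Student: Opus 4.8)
The plan is to establish the two assertions of the proposition — the exact value of the functional and the non-maximality — in turn. For the value, the key observation is that, $\varphi_{m,n}$ being minimal in the unit sphere, one has $\Delta\varphi_{m,n}=2\varphi_{m,n}$, so that each ambient coordinate function restricts to an eigenfunction of $\Delta$ with eigenvalue $2$. By the Main Theorem the induced metric is extremal for $\Lambda_{N(2)}$, and since $N(2)=\#\{i\mid\lambda_i<2\}$ while $2$ is itself an eigenvalue, we get $\lambda_{N(2)}(M_{m,n})=2$. Consequently
$$\Lambda_{N(2)}(M_{m,n})=2\area(M_{m,n}),$$
and the whole problem reduces to computing the area of $M_{m,n}$.

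To this end I would compute the metric induced by~(\ref{phi}). Writing each of the three slots as a real function of $x$ times a unit complex exponential in $y$, one checks that the mixed term $\langle\varphi_x,\varphi_y\rangle$ is the real part of a purely imaginary number and hence vanishes, so the metric is diagonal, $E\,dx^2+G\,dy^2$. A direct computation collapses the two diagonal entries to
$$G=(m+n)\bigl(n+(m-n)\sin^2 x\bigr),\qquad \sqrt{EG}=\frac{(m+n)\bigl(n+(m-n)\sin^2x\bigr)}{\sqrt{n(2m+n)+(m^2-n^2)\sin^2x}}.$$
Rewriting the radicand as $(m^2+2mn)\bigl(1-k^2\cos^2x\bigr)$ with $k^2=\tfrac{m^2-n^2}{m^2+2mn}$ and applying $x\mapsto\tfrac\pi2-x$, the integral $\int_0^\pi\sqrt{EG}\,dx$ becomes a linear combination of $K(k)$ and $E(k)$; the elementary identities $\tfrac{m-n}{k^2}=\tfrac{m^2+2mn}{m+n}$ and $m-\tfrac{m-n}{k^2}=-\tfrac{mn}{m+n}$ then give
$$\int_0^\pi\sqrt{EG}\,dx=2\left(\sqrt{m^2+2mn}\,E(k)-\frac{mn}{\sqrt{m^2+2mn}}\,K(k)\right).$$
The remaining, and most delicate, point in the value computation is the period lattice of $\varphi_{m,n}$, which fixes the domain of integration. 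Since $\sqrt{EG}$ depends on $x$ only through $\sin^2x$ it is $\pi$-periodic, and one verifies that $\varphi_{m,n}(x+\pi,y)=\varphi_{m,n}(x,y+\pi)$ holds exactly when both $m$ and $n$ are odd. Thus for $mn\equiv1$ the torus carries the extra identification $(x,y)\sim(x+\pi,y+\pi)$ and admits the fundamental domain $[0,\pi)\times[0,2\pi)$, whereas for $mn\equiv0$ the fundamental domain is $[0,2\pi)\times[0,2\pi)$, of twice the area. Integrating $\sqrt{EG}$ over the appropriate domain and multiplying by $2$ yields the two displayed formulae, the factor $16\pi$ against $8\pi$ being precisely this doubling.

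For the non-maximality, following~\cite{Karpukhin2}, I would compare the computed value against a lower bound for $\Lambda_k^*(\mathbb{T}^2):=\sup_g\Lambda_k(\mathbb{T}^2,g)$. Combining Nadirashvili's value $\Lambda_1^*(\mathbb{T}^2)=\tfrac{8\pi^2}{\sqrt3}$~\cite{Nadirashvili1} with the gluing inequality $\Lambda_k^*(\mathbb{T}^2)\geqslant\Lambda_{k-1}^*(\mathbb{T}^2)+8\pi$ gives $\Lambda_k^*(\mathbb{T}^2)\geqslant\tfrac{8\pi^2}{\sqrt3}+8\pi(k-1)$, so it suffices to show that the value of the proposition lies strictly below this. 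Using $E(k)\leqslant\tfrac\pi2$, discarding the negative $K$-term, and $\sqrt{m^2+2mn}\leqslant m+n$, the value is at most $8\pi^2(m+n)$ in the even case and $4\pi^2(m+n)$ in the odd case; since $\pi<4$, comparison with $\tfrac{8\pi^2}{\sqrt3}+8\pi(k-1)$ produces the strict inequality as soon as $m+n$ exceeds an explicit threshold, a condition met by every admissible pair in the even case and by all but $(m,n)=(3,1)$ in the odd case. The finitely many exceptional pairs are then settled by direct evaluation of the elliptic integrals, while the excluded pair $(1,1)$ is exactly the equilateral flat torus realizing Nadirashvili's maximum, which explains the hypothesis $\{m,n\}\ne\{1,1\}$. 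I expect the main obstacle to be the first half: the algebraic collapse of $EG$ to a single square root together with the parity-dependent determination of the fundamental domain, on which the factor distinguishing the two cases rests. In the non-maximality step the only genuine subtlety is the handful of small pairs not covered by the crude elliptic estimate.
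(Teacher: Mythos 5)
Your proposal is correct. Its first half reproduces the paper's own computation: you identify $\lambda_{N(2)}=2$ so that $\Lambda_{N(2)}(M_{m,n})=2\area(M_{m,n})$, your $G=(m+n)\bigl(n+(m-n)\sin^2x\bigr)$ and $\sqrt{EG}$ agree with the metric displayed in Section~\ref{StL}, the reduction to $K$ and $E$ matches formula~(\ref{Vol}), and the parity-dependent fundamental domain (halving the area when $mn\equiv 1 \bmod 2$ because of the identification $(x,y)\sim(x+\pi,y+\pi)$) is exactly the remark following that formula. Where you genuinely diverge is the non-maximality argument. The paper cites two black-box propositions from~\cite{Karpukhin1}: the lower bound $\sup_g\Lambda_n(\mathbb{T}^2,g)>8\pi n$ and the inequality $K(k)\geqslant\frac{2}{2-k^2}E(k)$; the latter, combined with $E\leqslant\pi/2$, yields the sharper intermediate bound $\Lambda_{N(2)}(M_{m,n})\leqslant 8\pi N(2)$, at the price of eight exceptional pairs ($\{3,1\},\{5,1\},\{5,3\},\{7,1\},\{7,3\},\{7,5\}$ odd; $\{2,1\},\{3,2\}$ even) settled by tables of elliptic integrals. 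You instead unpack the lower bound into its ingredients --- Nadirashvili's value $8\pi^2/\sqrt{3}$ for $\sup_g\Lambda_1(\mathbb{T}^2,g)$ plus the Colbois--El Soufi gluing inequality $\sup_g\Lambda_k\geqslant\sup_g\Lambda_{k-1}+8\pi$, which is precisely how~\cite{Karpukhin1} proves its proposition --- obtaining the stronger threshold $8\pi^2/\sqrt{3}+8\pi(k-1)$, and pair it with a much cruder upper estimate (drop the $K$-term, $E\leqslant\pi/2$, $\sqrt{m^2+2mn}\leqslant m+n$). The arithmetic checks out: in the even case the needed inequality $(4-\pi)(m+n)>4-\pi/\sqrt{3}$ holds for all admissible pairs since $m+n\geqslant 3$, in the odd case $(2-\pi/2)(m+n)>4-\pi/\sqrt{3}$ holds for $m+n\geqslant 6$, and the single remaining pair $(3,1)$ satisfies $8\pi\bigl(\sqrt{15}\,E-\tfrac{3}{\sqrt{15}}K\bigr)\approx 93 < 146\approx 8\pi^2/\sqrt{3}+32\pi$. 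So your route trades the paper's refined elliptic-integral lemma for a stronger spectral lower bound and reduces eight table-checked cases to one; the paper's route, in exchange, establishes the cleaner and independently interesting inequality $\Lambda_{N(2)}(M_{m,n})\leqslant 8\pi N(2)$. The only point you would need to make rigorous is a precise reference for the gluing inequality (Colbois--El Soufi), which this paper does not cite directly.
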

\begin{remark}
It is easy to check that $\varphi_{1,1}$ is an immersion of the flat equilateral torus in $\mathbb{S}^5$ by first eigenfuctions and as it was shown in the paper~\cite{Nadirashvili1} this metric is maximal for the functional $\Lambda_1(\mathbb{T}^2,g)$.
\end{remark}
The paper is organized in the following way. In Section~\ref{min} we describe $M_{m,n}$ as a part of a general construction from the paper~\cite{Mironov2} by Mironov. Then in Section~\ref{StL} we reduce the problem of finding $N(2)$ for $\Delta$ to the similar problem for a family of periodic Sturm-Liouville operators.
Finally, Section~\ref{proof} contains the proof of Main Theorem and Section~\ref{proof1} is dedicated to the proof of Proposition~\ref{MainProposition}.
\section{Preliminaries}
\label{prelim}
\subsection{Construction of minimal Lagrangian submanifolds in $\mathbb{C}^n$ by Mironov}
Let $M$ be a $k$-dimensional submanifold of $\mathbb{R}^n$ given by equations
$$
e_{1j}u_1^2 + \ldots + e_{nj}u_n^2 = d_j, \qquad j=1,\ldots,n-k,
$$
where $d_j\in \mathbb{R}$ and $e_{ij}\in \mathbb{Z}$. Since $\dim M = k$, the vectors $e_j = (e_{j1},\ldots,e_{j(n-k)})\in \mathbb{Z}^{n-k}$, $j = 1,\ldots,n$ form a lattice $\Lambda$ of maximal rank in $\mathbb{R}^{n-k}$. Let us denote by $\Lambda^*$ the dual lattice to $\Lambda$
$$
\Lambda^* = \{y\in\mathbb{R}^{n-k}|(e_i,y)\in \mathbb{Z}, i=1,\ldots,n\},
$$
where $(x,y) = x_1y_1+\ldots+x_{n-k}y_{n-k}$.

Consider the map $\varphi\colon M\times (\mathbb{R}^{n-k}/\Lambda^*)\to \mathbb{C}^n$ given by the explicit formula
$$
\varphi(u_1,\ldots,u_n,y) = (u_1e^{2\pi i (e_1,y)},\ldots,u_ne^{2\pi i (e_n,y)}).
$$  
We endow $\mathbb{C}^n$ with the standard symplectic form
$$
\omega = dx^1\wedge dy^1 + \ldots + dx^n\wedge dy^n.
$$
Recall that an immersion $\psi\colon N\looparrowright\mathbb{C}^n$ is called Lagrangian if $\psi^*\omega = 0$.
\label{min}
\begin{theorem}[Mironov~\cite{Mironov2}]
\label{MironovTheorem}
Suppose that $e_1 + \ldots + e_n=0$. Then the immersion $\varphi$ is a minimal Lagrangian immersion.
\end{theorem}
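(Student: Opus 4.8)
The plan is to write the immersion in complex coordinates as $z_i = u_i e^{i\theta_i}$ with $\theta_i = 2\pi(e_i,y)$, and to establish the two assertions — Lagrangian and minimal — separately. The basic input is the pullback of the coordinate differentials: since $\theta_i$ depends only on $y$, one gets $\varphi^*dz_i = e^{i\theta_i}(du_i + iu_i\,d\theta_i)$ and $\varphi^*d\bar z_i = e^{-i\theta_i}(du_i - iu_i\,d\theta_i)$, where $d\theta_i = 2\pi\sum_\alpha e_{i\alpha}\,dy_\alpha$. Everything below is an exercise in wedging these.

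For the Lagrangian property I would compute $\varphi^*\omega = \frac{i}{2}\sum_i \varphi^*(dz_i\wedge d\bar z_i)$. The phase factors cancel in each term, giving $\varphi^*(dz_i\wedge d\bar z_i) = -2iu_i\,du_i\wedge d\theta_i$, so that $\varphi^*\omega = \sum_i u_i\,du_i\wedge d\theta_i = \pi\sum_\alpha d\!\left(\sum_i e_{i\alpha}u_i^2\right)\wedge dy_\alpha$. On $M$ the inner sum equals the constant $d_\alpha$, hence its differential vanishes and $\varphi^*\omega=0$. I note that this step uses only the defining equations of $M$ and not the hypothesis $e_1+\dots+e_n=0$.

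For minimality I would invoke the standard fact that a Lagrangian immersion $\psi$ into $\mathbb{C}^n$ is minimal if and only if its Lagrangian angle is constant, i.e. the phase of $\psi^*\Omega$ relative to the induced volume form $dV$ is constant, where $\Omega = dz_1\wedge\dots\wedge dz_n$. Pulling back, $\varphi^*\Omega = e^{i\sum_i\theta_i}\bigwedge_i(du_i + iu_i\,d\theta_i)$, and here the hypothesis enters decisively: $\sum_i\theta_i = 2\pi(e_1+\dots+e_n,y)=0$, so the global phase factor is trivial. It then remains to analyze $\bigwedge_i(du_i + iu_i\,d\theta_i)$ on the $n$-dimensional domain $M\times(\mathbb{R}^{n-k}/\Lambda^*)$.

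The crux — and the step I expect to be the main obstacle — is to show that this last form has constant phase. Expanding the product, each term chooses $du_i$ from some factors and $iu_i\,d\theta_i$ from the remaining ones. A dimension count forces every nonzero term to use exactly $k$ of the $du_i$ (since $\dim M = k$ and the $du_i$ satisfy the $n-k$ relations $\sum_i e_{i\alpha}u_i\,du_i=0$) and exactly $n-k$ of the $d\theta_i$ (since the torus factor has dimension $n-k$). Hence every surviving term carries precisely $n-k$ factors of $i$, so $\varphi^*\Omega = i^{n-k}\Theta$ with $\Theta$ a real $n$-form. Because $\psi$ is Lagrangian one has $|\varphi^*\Omega| = dV$, so $\Theta = \pm dV$ is nowhere zero, and by continuity its sign is constant on the connected domain. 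Therefore the Lagrangian angle is constant and $\varphi$ is minimal. I would finish by carefully double-checking the degree and sign bookkeeping in this wedge expansion, which is precisely where errors are easiest to make.
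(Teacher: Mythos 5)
Your proposal is correct. Note first that the paper you are being compared against contains no proof of this statement at all: it is quoted as Theorem~\ref{MironovTheorem} with attribution to Mironov~\cite{Mironov2}, so there is no internal argument to match. Your argument is essentially the standard (and essentially Mironov's own) one via the Lagrangian angle: the computation $\varphi^*\omega = \pi\sum_\alpha d\bigl(\sum_i e_{i\alpha}u_i^2\bigr)\wedge dy_\alpha = 0$ on $M$ is right and correctly independent of the hypothesis $e_1+\dots+e_n=0$; the hypothesis enters exactly where you put it, killing the global phase $e^{i\sum_i\theta_i}$ in $\varphi^*\Omega$; and your dimension count (at most $k$ independent $du_i$ on $M$, at most $n-k$ independent $d\theta_i$ on the torus factor, total degree $n$) does force every surviving term of $\bigwedge_i(du_i+iu_i\,d\theta_i)$ to carry exactly the factor $i^{n-k}$, so $\varphi^*\Omega=i^{n-k}\Theta$ with $\Theta$ real. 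Two small remarks. First, your appeal to the facts that a Lagrangian immersion satisfies $\varphi^*\Omega=e^{i\theta}\,dV$ and is minimal iff $\theta$ is constant is legitimate, but these are theorems of Harvey--Lawson calibrated geometry and should be cited as such rather than left as ``standard facts.'' Second, $M$ (a real quadric intersection) need not be connected, so ``constant on the connected domain'' is too strong; what your argument actually gives is that $e^{i\theta}=\pm i^{n-k}$ takes values in a discrete set, hence $\theta$ is \emph{locally} constant, and since minimality ($H=J\nabla\theta=0$) is a local condition this is all you need.
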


Let us now consider a particular case $M = \{(x_1,x_2,x_3)|mx_1^2+nx_2^2 - (m+n)x_3^2 = 0\}\subset \mathbb{R}^3$. Then by Theorem~\ref{MironovTheorem}, the immersion $\varphi$ is a minimal Lagrangian immersion.  It is easy to see that in this case $\im\varphi$ is a cone $C(M_{m,n})$ over $M_{m,n}$. It is a standard fact that $C(M_{m,n})$ is minimal in $\mathbb{C}^3$ iff $M_{m,n}$ is minimal in $\mathbb{S}^5\subset \mathbb{C}^3$, see e.g. the paper~\cite{Simons}. 

%Let us also remark that $M_{m,n}$ is a Legendrian submanifold of $\mathbb{S}^n$. It means that for the standard contact form $\gamma = i_X\omega$ on $\mathbb{S}^5$ one %has $\gamma|_{M_{m,n}} = 0$, where $X = x^i\dfrac{\partial}{\partial x^i}$ is the Euler vector field in $\mathbb{R}^6 = \mathbb{C}^3$   

\subsection{Symmetries of $\varphi_{m,n}$.}
The goal of this section is to prove the following proposition.
\begin{proposition}
If $mn\equiv 1\mod 2$ then one has $\varphi_{m,n}(x,y) = \varphi_{m,n}(x+\pi,y+\pi)$ and $\varphi_{m,n}|_{[0,2\pi)\times [0,2\pi)}$ is a double cover almost everywhere. 
If $mn\equiv 0\mod 2$ then $\varphi_{m,n}|_{[0,2\pi)\times [0,2\pi)}$ is one-to-one almost everywhere. Thus $M_{m,n}$ is a torus for each $m,n>0$, $(m,n)=1$.
\end{proposition}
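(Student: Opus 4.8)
The plan is to determine, for a generic point $(x,y)$, the complete set of $(x',y')$ with $\varphi_{m,n}(x',y') = \varphi_{m,n}(x,y)$, and then to read off both the covering statements and the torus conclusion from the resulting period lattice. Throughout I would work on the generic locus where $\sin x\cos x \ne 0$; its complement in $[0,2\pi)^2$ is a finite union of circles, hence of measure zero, which is what licenses the phrase ``almost everywhere.''

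First I would compare the moduli of the three complex coordinates. The three nonnegative radial factors of $\varphi_{m,n}$ depend only on $\sin^2 x$ and $\cos^2 x$, and the third factor $f(x) = \sqrt{\tfrac{n\cos^2 x}{m+2n} + \tfrac{m\sin^2 x}{2m+n}}$ is strictly positive. Hence $\varphi_{m,n}(x,y) = \varphi_{m,n}(x',y')$ forces $|\sin x| = |\sin x'|$ and $|\cos x| = |\cos x'|$, i.e. $\sin^2 x = \sin^2 x'$. On the generic locus this leaves exactly four possibilities within one period: $x' \in \{x,\ \pi-x,\ \pi+x,\ -x\}\pmod{2\pi}$.

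Next, for each case I would compare phases, writing $\delta = y-y'$. Using that $f$ is invariant under $x\mapsto \pi\pm x$, the three coordinate equations reduce to congruences modulo $2\pi$ for $m\delta$, $n\delta$, $(m+n)\delta$, with signs dictated by $\sin(\pi\pm x)$ and $\cos(\pi\pm x)$. The cases $x' = \pi - x$ and $x' = -x$ are immediately inconsistent, since each demands $m\delta + n\delta \equiv \pi$ while simultaneously $(m+n)\delta \equiv 0 \pmod{2\pi}$. The case $x'=x$ forces $m\delta \equiv n\delta \equiv 0$, so writing $1 = am+bn$ (possible as $(m,n)=1$) gives $\delta \equiv 0\pmod{2\pi}$, the identity. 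The remaining case $x'=x+\pi$ requires $m\delta \equiv n\delta \equiv \pi \pmod{2\pi}$; the same coprimality computation shows this is solvable exactly when $m$ and $n$ are both odd, in which case the unique solution modulo $2\pi$ is $\delta\equiv\pi$.

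Finally I would translate this into the two assertions. When $mn\equiv 1\pmod 2$, the only extra coincidence is the identification $(x,y)\sim(x+\pi,y+\pi)$; I would confirm directly that this is an identity of $\varphi_{m,n}$ (using $(-1)^m=(-1)^n=-1$ and $(-1)^{m+n}=1$), so $\varphi_{m,n}$ factors through the lattice $L=\langle(0,2\pi),(\pi,\pi)\rangle$, which contains $(2\pi\mathbb{Z})^2$ with index $2$; thus $[0,2\pi)^2$ is a double cover almost everywhere. When $mn\equiv 0\pmod 2$ no extra coincidence survives, the period lattice is exactly $(2\pi\mathbb{Z})^2$, and $\varphi_{m,n}|_{[0,2\pi)^2}$ is one-to-one almost everywhere. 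In either case $\varphi_{m,n}$ descends to a generically injective immersion of the torus $\mathbb{R}^2/L$, so $M_{m,n}$ is a torus. The bulk of the work is the bookkeeping of the four sign-and-congruence cases; the genuinely delicate point is justifying ``almost everywhere,'' namely checking that the excluded locus $\sin x\cos x = 0$ is negligible and does not alter the topological conclusion.
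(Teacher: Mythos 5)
Your proof is correct and takes essentially the same route as the paper's: compare the moduli and the phases of the three complex coordinates and use coprimality of $m$ and $n$ to pin down all possible coincidences, the extra one $(x,y)\mapsto(x+\pi,y+\pi)$ surviving exactly when $m$ and $n$ are both odd. If anything, yours is more complete: the paper's three-sentence proof only classifies translational symmetries $(x,y)\mapsto(x+a,y+b)$, whereas you also rule out the reflection-type coincidences $x'=\pi-x$ and $x'=-x$, which is what the ``almost everywhere'' one-to-one/double-cover claim strictly requires.
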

\begin{remark}
In fact, according to the paper~\cite{Mironov2}, one can omit the words "almost everywhere" in the previous proposition.
\end{remark}
\begin{proof}
Since $(m,n) = 1$, there are no symmetries of the form $(x,y)\mapsto(x,y+\alpha)$. Examining the third coordinate of $\varphi_{m,n}$, we see that the only possible symmetry has the form $(x,y)\mapsto \left(x+\pi,y+\frac{2\pi}{m+n}\right)$. Substituting this into the first two coordinates of $\varphi_{m,n}$ we obtain the statement of the proposition.
\end{proof}

\subsection{Associated periodic Sturm-Liouville problem.}
In this section we reduce the problem of finding $N(2)$ for the Laplace-Beltrami operator on $M_{m,n}$ to a similar problem for the associated Sturm-Liouville operator. 
\label{StL}

Direct calculations show that the metric on $M_{m,n}$ is given by 
$$
g = (m+n)\left(\frac{(m+n)-(m-n)\cos 2x}{m^2+4mn+n^2 - (m^2-n^2)\cos 2x} dx^2 + \frac{1}{2}(m+n-(m-n)\cos 2x)dy^2\right).
$$

Let us introduce the notations $\sigma(x) = \sqrt{m^2+4mn+n^2 - (m^2-n^2)\cos 2x}$ and $\rho(x)= (m+n)(m+n-(m-n)\cos 2x)$. Then a straightforward calculation shows that
the following formula holds for the Laplace-Beltrami operator,
\begin{equation}
\label{Laplace}
\Delta f = - \frac{1}{\rho(x)}\left(\sigma(x)\frac{\partial}{\partial x}\left(\sigma(x)\frac{\partial f}{\partial x}\right) + 2 \frac{\partial^2 f}{\partial y^2}\right).
\end{equation}

\begin{proposition}
Assume $mn\equiv 0\mod 2$. The number $\lambda$ is the eigenvalue of the Laplace-Beltrami operator~(\ref{Laplace}) if and only if there exists $l\in\mathbb{Z}_{\geqslant 0}$ such that there is a solution of the following associated periodic Sturm-Liouville problem
\begin{equation}
\label{assSL}
\begin{split}
&-\sigma(x)\frac{d}{dx}\left(\sigma(x)\frac{d h(x)}{dx}\right) + 2l^2 h(x) = \lambda\rho(x) h(x), \\
&h(x+2\pi) \equiv h(x).
\end{split}
\end{equation}
Corresponding eigenspace is spanned by the functions of the form $h(l,x)\sin lx$ and $h(l,x)\cos lx$, where $l$ is any positive integer number such that a solution of problem~(\ref{assSL}) exists and $h(l,x)$ is the corresponding solution.

If $mn\equiv 1 \mod 2$ then the statement remains the same with the boundary conditions
\begin{equation}
\label{bc}
h(x+\pi) \equiv (-1)^lh(x).
\end{equation}
\end{proposition}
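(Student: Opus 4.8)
The plan is to diagonalize $\Delta$ by separation of variables, exploiting that the coefficients in (\ref{Laplace}) are independent of $y$. Equivalently, $\Delta$ commutes with $\partial/\partial y$, i.e. with the circle action $(x,y)\mapsto(x,y+t)$, which is well defined on $M_{m,n}$ because translation in $y$ preserves both the metric and the relevant lattice. Hence $L^2(M_{m,n})$ splits as a Hilbert sum $\bigoplus_l V_l$ of $\Delta$-invariant isotypic components, where $V_l$ consists of functions of the form $h(x)e^{ily}$; on each $V_l$ the operator $\Delta$ reduces to an ordinary differential operator in $x$. First I would make this reduction explicit and then read off the admissible frequencies $l$ and the boundary conditions on $h$ from the description of $M_{m,n}$ in the symmetry proposition.

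Substituting $f(x,y)=h(x)e^{ily}$ into $\Delta f=\lambda f$ and using $\partial_y^2 f=-l^2 f$, formula (\ref{Laplace}) gives at once
$$-\sigma(x)\frac{d}{dx}\left(\sigma(x)\frac{dh}{dx}\right)+2l^2h(x)=\lambda\rho(x)h(x),$$
which is precisely (\ref{assSL}). Taking real and imaginary parts of $e^{ily}$ then shows that, for each admissible $l$ and each solution $h(l,x)$, the functions $h(l,x)\cos ly$ and $h(l,x)\sin ly$ are eigenfunctions of $\Delta$ with eigenvalue $\lambda$, only the first surviving when $l=0$. The boundary conditions come from the periodicity of $M_{m,n}$. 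When $mn\equiv 0\bmod 2$ the surface is $\mathbb{R}^2/(2\pi\mathbb{Z})^2$, so $f$ is $2\pi$-periodic in each variable: periodicity in $y$ forces $l\in\mathbb{Z}$, and periodicity in $x$ is the condition $h(x+2\pi)\equiv h(x)$ in (\ref{assSL}). When $mn\equiv 1\bmod 2$ the symmetry proposition identifies $M_{m,n}$ with $\mathbb{R}^2/L$ for the lattice $L$ generated by $(2\pi,0)$ and $(\pi,\pi)$; invariance of $h(x)e^{ily}$ under $(\pi,\pi)$ amounts to $(-1)^lh(x+\pi)=h(x)$, which is exactly the twisted condition (\ref{bc}) and already forces $2\pi$-periodicity in $x$.

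It remains to prove completeness, namely that these separated solutions exhaust the eigenspaces, which establishes both the "if and only if" statement and the description of the eigenspace. For this I would expand an arbitrary eigenfunction $f$ in its $y$-Fourier series $\sum_l h_l(x)e^{ily}$ over the admissible $l$; since $\Delta$ preserves each $V_l$ and acts there as the Sturm-Liouville operator in (\ref{assSL}), every nonzero component $h_l$ is itself a solution of (\ref{assSL}) for the same $\lambda$. Conversely the spectral theory of the self-adjoint periodic Sturm-Liouville operator on the circle, for each fixed $l$, produces a discrete set of such $\lambda$. The hard part is not any single computation but the bookkeeping that links the $y$-Fourier decomposition to the correct function space in each parity case: one must verify that the circle action descends to $M_{m,n}$, that the twisted condition (\ref{bc}) is precisely the descent condition for the lattice $L$, and that the degenerate mode $l=0$ (where $\sin ly$ vanishes) is accounted for. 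Once these points are settled the proposition follows.
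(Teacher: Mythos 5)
Your proposal is correct and follows essentially the same route as the paper: the paper likewise uses the commutation of $\Delta$ with $\partial^2/\partial y^2$ (your circle action) to separate variables, substitutes eigenfunctions of the form $h(l,x)\cos ly$, $h(l,x)\sin ly$ into formula~(\ref{Laplace}) to obtain equation~(\ref{assSL}), and reads off the boundary conditions from double $2\pi$-periodicity, respectively from the symmetry $f(x+\pi,y+\pi)=f(x,y)$ in the case $mn\equiv 1 \bmod 2$. Your $y$-Fourier-series completeness argument merely makes explicit what the paper asserts in one line via the existence of a common eigenbasis of the two commuting operators.
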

\begin{proof}
Let us remark that $\Delta$ commutes with $\dfrac{\partial^2}{\partial y^2}$. Thus these operators have a common basis of eigenfunctions of the form $h(l,x)\cos lx$ and $h(l,x)\sin lx$. By substituting these eigenfunctions into formula~(\ref{Laplace}) we obtain equation~(\ref{assSL}). Since any function on $M_{m,n}$ should be doubly $2\pi$-periodic, we have $l\in \mathbb{Z}_{\geqslant 0}$ and the boundary conditions in~(\ref{assSL}).

In the case $mn\equiv 1 \mod 2$ any function $f\in C^{\infty}(M_{m,n})$ should satisfy the condition $f(x+\pi,y+\pi) = f(x,y)$. This condition implies immediately boundary conditions~(\ref{bc}).
\end{proof}

For a general Sturm-Liouville problems the following classic proposition holds, see e.g. book~\cite{CL}.
\begin{proposition} Consider a periodic Sturm-Liouville problem in the form 
\begin{equation}
\label{SL}
\begin{split}
& -\frac{d}{dt}\left(p(t)\frac{d}{dt}h(t)\right) + q(t)h(t) = \lambda r(t) h(t),\\
& h(t+t_0)\equiv h(t),
\end{split}
\end{equation}
where $p(t),r(t)>0$ and $p(t+t_0)\equiv p(t),q(t+t_0)\equiv q(t),r(t+t_0)\equiv r(t)$. Let us denote by 
$\lambda_i$ and $h_i(t)$ ($i = 0,1,2,\ldots$) the eigenvalues and eigenfunctions of the problem (\ref{SL}).
Then the following inequalities hold, 
$$
\lambda_0<\lambda_1\leqslant\lambda_2<\lambda_3\leqslant\lambda_4<\lambda_5\leqslant\lambda_6\ldots
$$
For $\lambda = \lambda_0$ there exists a one-dimensional eigenspace spanned by $h_0(t)$.
For $i\geqslant 0$ if $\lambda_{2i+1}<\lambda_{2i+2}$ then there is a one-dimensional $\lambda_{2i+1}$-eigenspace spanned by $h_{2i+1}(t)$ 
 and there is a one-dimensional $\lambda_{2i+2}$-eigenspace spanned by $h_{2i+2}(t)$. If $\lambda_{2i+1} = \lambda_{2i+2}$ then there is a two-dimensional eigenspace spaned by
$h_{2i+1}(t)$ and $h_{2i+2}(t)$ with eigenvalue $\lambda = \lambda_{2i+1} = \lambda_{2i+2}$. \label{SturmProp}

The eigenfunction $h_0(t)$ has no zeros on $[0,t_0)$. The eigenfunctions $h_{2i+1}(t)$ 
and $h_{2i+2}(t)$ each have
exactly $2i+2$ zeros on $[0,t_0)$. 
\end{proposition}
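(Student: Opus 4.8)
The plan is to combine Floquet theory for the monodromy operator with Sturm-type oscillation theory via the Pr\"ufer transformation; this is the classical route to such statements, and it is precisely the content assumed from~\cite{CL}. First I would fix the fundamental system of solutions $\phi(t,\lambda),\psi(t,\lambda)$ of the equation $-(ph')'+qh=\lambda r h$ determined by the initial data $\phi(0)=1$, $(p\phi')(0)=0$ and $\psi(0)=0$, $(p\psi')(0)=1$, and form the monodromy matrix $T(\lambda)$ sending $(h(0),(ph')(0))$ to $(h(t_0),(ph')(t_0))$. Writing the equation as the first-order system $u'=v/p$, $v'=(q-\lambda r)u$ with $(u,v)=(h,ph')$, the coefficient matrix is trace-free, so Abel's identity gives $\det T(\lambda)=1$; hence $T(\lambda)\in SL_2(\mathbb{R})$ and its trace, the discriminant $D(\lambda)=\phi(t_0,\lambda)+(p\psi')(t_0,\lambda)$, is an entire function of $\lambda$. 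A number $\lambda$ is an eigenvalue of~(\ref{SL}) exactly when a nontrivial periodic solution exists, i.e. when $T(\lambda)$ has eigenvalue $1$, i.e. when $D(\lambda)=2$; the eigenspace is two-dimensional precisely when $T(\lambda)=\mathrm{Id}$ and one-dimensional when $D(\lambda)=2$ but $T(\lambda)\ne\mathrm{Id}$.

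Next I would analyze the graph of $D$. Standard integral estimates show $D(\lambda)\to+\infty$ as $\lambda\to-\infty$ and that $D$ is oscillatory with $|D|$ eventually exceeding every bound as $\lambda\to+\infty$, while at each simple solution of $D(\lambda)=\pm 2$ one has $D'\ne 0$. The resulting band--gap picture is the heart of the argument: $D$ descends through $+2$ at a first simple value $\lambda_0$, crosses a stability band $|D|<2$ to meet $-2$ at the antiperiodic edges, and then returns to $+2$ either transversally at two points bounding an instability gap where $D>2$ (giving $\lambda_{2i+1}<\lambda_{2i+2}$) or tangentially at a single point (giving $\lambda_{2i+1}=\lambda_{2i+2}$ with $T=\mathrm{Id}$). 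Reading off the consecutive solutions of $D=2$ in this picture yields exactly the ordering $\lambda_0<\lambda_1\leqslant\lambda_2<\lambda_3\leqslant\lambda_4<\cdots$, where the alternation of strict and non-strict inequalities records whether a given pair bounds a band or a gap, and simplicity of $\lambda_0$ forces $\lambda_0<\lambda_1$ strictly.

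Finally, for the zero counting I would introduce the Pr\"ufer variables $h=R\sin\theta$, $ph'=R\cos\theta$ and study the phase $\theta(t,\lambda)$, which satisfies $\theta'=\cos^2\theta/p+(\lambda r-q)\sin^2\theta$. Since $\partial\theta'/\partial\lambda=r\sin^2\theta\geqslant 0$, the quantity $\theta(t_0,\lambda)$ is strictly increasing in $\lambda$, and each zero of $h$ on $[0,t_0)$ corresponds to $\theta$ crossing a multiple of $\pi$. For a periodic solution both $h$ and $ph'$ return to their initial values, so $\theta(t_0)-\theta(0)$ is an integer multiple of $2\pi$, forcing an even number of zeros per period. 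Monotonicity in $\lambda$ then pins this number down: at $\lambda_0$ the increment is $0$, whence $h_0$ has no zeros, and it jumps by $2\pi$ as $\lambda$ passes each successive antiperiodic region, so that $h_{2i+1}$ and $h_{2i+2}$ each have exactly $2i+2$ zeros on $[0,t_0)$.

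The main obstacle is this last step: making the count exact rather than merely monotone. One must show that the Pr\"ufer increment is constant and equal to $2\pi(i+1)$ on the entire closed band containing $\lambda_{2i+1},\lambda_{2i+2}$, jumping by precisely $2\pi$ across each intervening antiperiodic interval, which requires correlating the sign changes of $D\mp 2$ with the passages of $\theta$ through multiples of $\pi$. The degenerate coexistence case $\lambda_{2i+1}=\lambda_{2i+2}$, where $T=\mathrm{Id}$ and one must verify that the two independent eigenfunctions share the same zero count, needs separate care.
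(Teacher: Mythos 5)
The first thing to say is that the paper contains no proof of this proposition: it is quoted as a classical fact with a pointer to the book~\cite{CL}, so the only meaningful comparison is with the standard textbook argument. Your outline is precisely that argument --- the monodromy matrix with $\det T(\lambda)=1$, the discriminant $D(\lambda)$, the characterization of periodic eigenvalues by $D(\lambda)=2$ (with a two-dimensional eigenspace exactly when $T(\lambda)=\mathrm{Id}$), the band--gap picture yielding $\lambda_0<\lambda_1\leqslant\lambda_2<\lambda_3\leqslant\lambda_4<\cdots$, and oscillation theory for the zero counts. So the route is the right one and matches the cited source; the first two paragraphs of your sketch are sound modulo standard lemmas (e.g.\ that $D=2$, $T\ne\mathrm{Id}$ forces $D'\ne 0$, which needs the classical integral formula for $D'(\lambda)$).

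The one genuine gap is the one you flag yourself, and it is worth naming why your proposed fix does not quite close it. Monotonicity of the Pr\"ufer phase $\theta(t_0,\lambda)$ in $\lambda$ holds for a solution with \emph{fixed} initial data, but the eigenfunctions of problem~(\ref{SL}) have initial phases that vary with $\lambda$, and between consecutive eigenvalues there is no periodic solution at all; so ``the increment jumps by $2\pi$ across each antiperiodic region'' does not follow from the monotonicity statement you wrote down. The classical way to finish (and what is done in~\cite{CL}) is to bring in the Dirichlet problem on $[0,t_0]$: the Sturm oscillation theorem gives the exact number of zeros of each Dirichlet eigenfunction, the Dirichlet eigenvalues interlace the periodic and antiperiodic ones (each lies in the closure of the corresponding instability interval), and Sturm comparison then transfers the exact counts to the periodic eigenfunctions. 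Alternatively one can use the rotation number of the flow on the circle of Pr\"ufer phases, which is constant on gaps and strictly increasing across bands; either way, some device beyond fixed-initial-data monotonicity is needed. The coexistence case $\lambda_{2i+1}=\lambda_{2i+2}$, which you set aside as delicate, is actually the easy one: there $T(\lambda)=\mathrm{Id}$, so \emph{every} solution is $t_0$-periodic, and since the increment $\theta(t_0)-\theta(0)$ is a continuous $2\pi\mathbb{Z}$-valued function of the initial phase, it is the same for all solutions; hence the two independent eigenfunctions automatically share one zero count.
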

\begin{proposition}
For $l\geqslant 0$ the eigenvalues $\lambda_i(l)$ of problem~(\ref{assSL}) are strictly increasing functions of the parameter $l$.
\label{monotonicity} 
\end{proposition}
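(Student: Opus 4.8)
The plan is to prove the proposition from the variational (Courant--Fischer) characterization of the eigenvalues, exploiting that the parameter $l$ enters~(\ref{assSL}) only through the nonnegative constant potential term $2l^2$. First I would rewrite~(\ref{assSL}) in the standard Sturm--Liouville form~(\ref{SL}) by dividing the equation by $\sigma(x)>0$, obtaining
$$
-\frac{d}{dx}\left(\sigma(x)\frac{dh}{dx}\right) + \frac{2l^2}{\sigma(x)}\,h = \lambda\,\frac{\rho(x)}{\sigma(x)}\,h,
$$
so that $p=\sigma$, $q=2l^2/\sigma$ and the weight is $r=\rho/\sigma$; note $p,r>0$ and $q\geqslant 0$ because $\sigma,\rho>0$ on the whole circle. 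The $i$-th eigenvalue then admits the min--max characterization
$$
\lambda_i(l) = \min_{\substack{V\subset H\\ \dim V = i+1}}\ \max_{0\neq h\in V}\ R_l[h],\qquad R_l[h] = \frac{\displaystyle\int_0^{2\pi}\left(\sigma\,(h')^2 + \frac{2l^2}{\sigma}\,h^2\right)dx}{\displaystyle\int_0^{2\pi}\frac{\rho}{\sigma}\,h^2\,dx},
$$
where $H$ is the fixed space of admissible functions determined by the boundary conditions.

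The heart of the argument is that for each fixed $h\not\equiv 0$ the quotient $R_l[h]$ is a strictly increasing function of $l$ on $[0,\infty)$: the denominator is independent of $l$, while the numerator increases strictly with $l^2$ since $\int_0^{2\pi} h^2/\sigma\,dx>0$. I would then transfer this strictness through the min--max as follows. Fix $0\leqslant l<l'$, let $V'$ be an $(i+1)$-dimensional subspace realizing the minimum for $\lambda_i(l')$, and let $h_1\in V'$ realize $\max_{h\in V'}R_l[h]$. Then
$$
\lambda_i(l)\ \leqslant\ \max_{h\in V'}R_l[h]\ =\ R_l[h_1]\ <\ R_{l'}[h_1]\ \leqslant\ \max_{h\in V'}R_{l'}[h]\ =\ \lambda_i(l'),
$$
which is exactly the assertion $\lambda_i(l)<\lambda_i(l')$. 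Equivalently, as the family is analytic in the parameter, one may differentiate and use the Feynman--Hellmann formula $\frac{d}{dl}\lambda_i(l)=4l\,\dfrac{\int_0^{2\pi}h_i^2/\sigma\,dx}{\int_0^{2\pi}\rho\,h_i^2/\sigma\,dx}$ for the corresponding eigenfunction $h_i$, which is positive for $l>0$ and gives the same conclusion.

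The delicate point — and the step I expect to be the main obstacle — is that the min--max comparison is valid only when the admissible space $H$ is the \emph{same} for $l$ and $l'$. For problem~(\ref{assSL}) the condition $h(x+2\pi)\equiv h(x)$ does not depend on $l$, so $H$ is fixed and the inequality chain above applies verbatim. For the boundary condition~(\ref{bc}) occurring when $mn\equiv 1\bmod 2$, however, the admissible space itself depends on the parity of $l$ (periodic for even $l$, anti-periodic for odd $l$), so the comparison is legitimate only between values of $l$ of the same parity, where $H$ is again fixed and the same argument goes through. Keeping track of this $l$-dependence of the boundary conditions is precisely the bookkeeping that must be carried out carefully, and it is the gap left in~\cite{PenskoiLawson} that I would fill.
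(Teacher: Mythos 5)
Your proof is correct and is essentially the paper's own argument: both use the min--max characterization of $\lambda_i(l)$ for the Sturm--Liouville form of~(\ref{assSL}), observe that the Rayleigh quotient $R_l[h]$ is strictly increasing in $l$ for each fixed $h$ (the denominator being $l$-independent), and transfer strictness through the min--max by testing $R_l$ on the optimal subspace spanned by the first $i+1$ eigenfunctions for the larger parameter. Your additional remarks (the Feynman--Hellmann alternative and the caveat that the admissible space for~(\ref{bc}) depends on the parity of $l$) go beyond the paper's proof but do not change the substance, since the proposition as stated concerns only the $l$-independent $2\pi$-periodic conditions of~(\ref{assSL}).
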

\begin{proof}
The Raleigh quotient for equation~(\ref{assSL}) is defined by the formula
$$
R_l[f] = \frac{\displaystyle\int\limits_0^{2\pi} \left(\sigma(x)(f')^2 + \frac{2l^2}{\sigma(x)}f^2\right)\,dx}{\displaystyle\int\limits_0^{2\pi}\frac{\rho(x)}{\sigma(x)}f^2\,dx}.
$$
By variational characterization of eigenvalues (see e.g. the book~\cite{Henrot}), one has
$$
\lambda_k(l) = \inf_{E_k}\sup_{f\in E_k} R_l[f],
$$
where infimum is taken over all $(k+1)$-dimensional subspaces in the space of all $2\pi$-periodic functions of the Sobolev space $H^1[0,2\pi]$. Moreover, the infimum is reached on the space $V_k(l)$ formed by the first $(k+1)$ eigenfunctions. Let us note that $R_{l_1}[f]<R_{l_2}[f]$ if $0\leqslant l_1 < l_2$.

Then $\lambda_k(l_1)\leqslant \sup_{f\in V_k(l_2)}R_{l_1}[f]$. The latter supremum is reached on some function $g\in V_k(l_2)$. Thus one has
$$
\lambda_k(l_1)\leqslant R_{l_1}[g]< R_{l_2}[g]\leqslant \sup_{f\in V_k(l_2)} = \lambda_k(l_2),
$$
which completes the proof.
\end{proof}

\section{Proof of Main Theorem}
\label{proof}
\subsection{Proof of the Main Theorem.}
We need the following classic theorem (see e.g. the book~\cite{KN}).
\begin{theorem} Let $M \looparrowright \mathbb{S}^n$ be a minimally immersed submanifold of the unit sphere 
$\mathbb{S}^n \subset \mathbb{R}^{n+1}$. Then the restrictions $x^1|_M,\ldots,x^{n+1}|_M$ on $M$ of the standard coordinate
functions of $\mathbb{R}^{n+1}$ are eigenfunctions of the Laplace-Beltrami operator on $M$ with eigenvalue $\dim M$.
\label{th2}
\end{theorem}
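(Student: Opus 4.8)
The plan is to use Beltrami's formula relating the intrinsic Laplacian of the position vector to the mean curvature vector, combined with the decomposition of the second fundamental form along the tower of inclusions $M \hookrightarrow \mathbb{S}^n \hookrightarrow \mathbb{R}^{n+1}$. The whole point is that minimality is assumed relative to $\mathbb{S}^n$, not relative to $\mathbb{R}^{n+1}$, so the ambient sphere contributes the eigenvalue.

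First I would write $\mathbf{x} = (x^1,\ldots,x^{n+1})\colon M \to \mathbb{R}^{n+1}$ for the inclusion (position vector) and fix a local orthonormal frame $e_1,\ldots,e_k$ on $M$, where $k = \dim M$. Applying the intrinsic Laplacian componentwise and using that $e_a\mathbf{x} = e_a$ as an $\mathbb{R}^{n+1}$-valued function, a direct computation yields $-\Delta_M\mathbf{x} = \sum_{a=1}^k(\bar\nabla_{e_a}e_a - \nabla^M_{e_a}e_a) = \sum_{a=1}^k \mathrm{II}(e_a,e_a) = k\mathbf{H}$, where $\bar\nabla$ is the Euclidean connection, $\mathrm{II}$ is the second fundamental form of $M$ in $\mathbb{R}^{n+1}$, and $\mathbf{H}$ is the mean curvature vector. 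I must keep track of the sign convention: the paper uses the positive (geometer's) Laplacian, so the minus sign in front of $\Delta_M$ is essential.

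Next I would decompose $\mathrm{II}$ using the two-step inclusion. For $X,Y$ tangent to $M$ (hence tangent to $\mathbb{S}^n$) one has $\mathrm{II}_{M\subset\mathbb{R}^{n+1}}(X,Y) = \mathrm{II}_{M\subset\mathbb{S}^n}(X,Y) + \mathrm{II}_{\mathbb{S}^n\subset\mathbb{R}^{n+1}}(X,Y)$. The hypothesis that $M$ is minimal in $\mathbb{S}^n$ kills the first term after tracing, so $\sum_a\mathrm{II}_{M\subset\mathbb{S}^n}(e_a,e_a) = 0$. For the second term I would invoke the standard identity for the unit sphere, $\bar\nabla_X Y = \nabla^{\mathbb{S}^n}_X Y - \langle X,Y\rangle\mathbf{x}$, i.e. $\mathrm{II}_{\mathbb{S}^n}(X,Y) = -\langle X,Y\rangle\mathbf{x}$, which gives $\sum_a\mathrm{II}_{\mathbb{S}^n}(e_a,e_a) = -k\mathbf{x}$.

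Combining the two displays yields $-\Delta_M\mathbf{x} = -k\mathbf{x}$, hence $\Delta_M x^i = k x^i$ for every $i$, which is precisely the claim with eigenvalue $k = \dim M$. The only genuinely delicate point is the bookkeeping: isolating minimality in $\mathbb{S}^n$ (which holds) from minimality in $\mathbb{R}^{n+1}$ (which fails), and keeping the sign conventions for $\Delta_M$ and for the sphere's second fundamental form mutually consistent. I expect these sign and trace-decomposition checks to be the main place where an error could slip in; everything else is a short frame computation.
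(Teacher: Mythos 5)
Your proof is correct: the sign conventions (the paper's positive Laplacian), the Beltrami formula $-\Delta_M\mathbf{x}=\sum_a\mathrm{II}(e_a,e_a)$, and the decomposition of the second fundamental form along $M\hookrightarrow\mathbb{S}^n\hookrightarrow\mathbb{R}^{n+1}$ with $\mathrm{II}_{\mathbb{S}^n}(X,Y)=-\langle X,Y\rangle\mathbf{x}$ are all handled consistently, giving $\Delta_M x^i=(\dim M)\,x^i$. The paper offers no proof of this statement --- it only cites it as classical (Kobayashi--Nomizu) --- and your argument is exactly the standard proof that citation points to, so there is nothing to correct or compare beyond that.
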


According to Theorem~\ref{th2}, the components of $\varphi_{m,n}$ are eigenfunctions of the Laplace-Beltrami operator on $M_{m,n}$. Since, the function  
$$
\sqrt{\frac{n\cos^2x}{m+2n} + \frac{m\sin^2x}{2m+n}}
$$ 
does not have zeroes on $[0,2\pi)$, we have
$$
h_0(m+n,x) = \sqrt{\frac{n\cos^2x}{m+2n} + \frac{m\sin^2x}{2m+n}}
$$
and $\lambda_0(m+n)=2$. By Proposition~\ref{monotonicity} one has $\lambda_0(l)<2$ for $l<m+n$. Similarly both $\sin x$ and $\cos x$ have $2$ zeroes on $[0,2\pi)$. Thus, again by Proposition~\ref{monotonicity}, either $\lambda_1(m) = 2$ and $\lambda_2(n) = 2$ or $\lambda_1(n) = 2$ and $\lambda_2(m) = 2$. In the latter case we have a contradiction since $m>n$ and $2=\lambda_1(n)<\lambda_1(m)\leqslant\lambda_2(m) = 2$. Thus, $\lambda_1(l)<2$ for $l<m$ and $\lambda_2(l)<2$ for $l<n$.
The last part of the proof is based on the following proposition which we prove later this section.
\begin{proposition}
The eigenvalue $\lambda_3(l)$ of problem~(\ref{assSL}) satisfies the inequality $\lambda_3(0)>2$.
\label{lambda3}
\end{proposition}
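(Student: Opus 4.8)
The quantity we must control is the fourth eigenvalue $\lambda_3(0)$ of the $l=0$ problem, and the plan is to locate $\lambda=2$ strictly below it. From the solutions $h_0(m+n,x)$, $\sin x$ and $\cos x$ exhibited above, together with Proposition~\ref{monotonicity}, we already know $\lambda_0(0),\lambda_1(0),\lambda_2(0)<2$. Hence, by the oscillation statement in Proposition~\ref{SturmProp} and the fact that the number of zeros per period of a solution is a nondecreasing function of $\lambda$, it suffices to prove that at $\lambda=2$ every solution of~(\ref{assSL}) with $l=0$ has at most three zeros on $[0,2\pi)$, since the fourth eigenfunction $h_3(0,x)$ must carry four.

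First I would record the identities $\sigma^2=\rho+2mn$ and $\sigma\sigma'=(m^2-n^2)\sin 2x$, immediate from the definitions, and pass to the variable $s$ with $ds=dx/\sigma(x)$ (admissible because $\sigma>0$), which turns the $l=0$ equation into the Hill form $-h_{ss}=\lambda\rho h$ with a positive periodic weight $\rho(x(s))$. Because $\sigma$ and $\rho$ depend on $x$ only through $\cos 2x$, the coefficients are even and $\pi$-periodic, so I would split each $2\pi$-periodic solution into its $\pi$-periodic/antiperiodic and even/odd parts and study the induced Neumann and Dirichlet problems on the quarter period $[0,\pi/2]$. In this picture the two eigenfunctions carrying four zeros are the first excited even $\pi$-periodic solution and the ground odd $\pi$-periodic solution, so the required bound becomes a statement about the zeros, on $(0,\pi/2)$, of the $\lambda=2$ solutions with Neumann, respectively Dirichlet, data at $x=0$.

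The three explicit solutions fit this scheme neatly: $\sin x$, $\cos x$ and $h_0(m+n,x)$ are the ground solutions at $\lambda=2$ of the Dirichlet--Neumann, Neumann--Dirichlet and Neumann--Neumann problems for the parameters $l=m,n,m+n$, and through Proposition~\ref{monotonicity} they reprove $\lambda_0(0),\lambda_1(0),\lambda_2(0)<2$. The essential difficulty is that all of these relations bound the desired eigenvalues only from below, whereas $\lambda_3(0)>2$ is a lower bound on an $l=0$ eigenvalue, i.e.\ an upper bound on the oscillation of the $l=0$, $\lambda=2$ solutions. Since among all $l$ the choice $l=0$ produces the largest potential $\lambda\rho$, Sturm comparison against the known $l>0$ solutions controls this oscillation only from below and therefore points in the wrong direction; the soft tools (monotonicity in $l$ and the interlacing of the boundary value problems) cannot by themselves close the argument.

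The hard part, and the gap I must fill rigorously, is thus a direct quantitative analysis of the $l=0$, $\lambda=2$ equation. The plan is to exploit that in the variable $s$ the weight is elliptic: one checks that $y=\sigma^2$ satisfies $(y_s)^2=$ a cubic in $y$, so that the Hill equation is of Lamé type, and then to use this explicit form to write the two solutions with Neumann and with Dirichlet data at $x=0$ and to count their zeros on $(0,\pi/2)$, confirming that neither forces a fourth zero. I expect this zero count to be the principal obstacle, since it is precisely the point at which monotonicity and comparison fail and a hands-on study of the Lamé solutions (or, equivalently, of the Floquet discriminant) becomes unavoidable; this is the step left non-rigorous in the analogous argument of~\cite{PenskoiLawson} that Proposition~\ref{lambda3} is meant to complete.
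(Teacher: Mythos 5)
Your reduction is sound as far as it goes: granting $\lambda_0(0),\lambda_1(0),\lambda_2(0)<2$, the claim $\lambda_3(0)>2$ is indeed equivalent to an oscillation bound at $\lambda=2$, and you correctly diagnose why monotonicity in $l$ and Sturm comparison against the explicit solutions $\sin x$, $\cos x$, $h_0(m+n,x)$ only push in the wrong direction. But at exactly the point where the proof has to happen, your text switches from argument to intention: ``the plan is to \ldots write the two solutions with Neumann and with Dirichlet data at $x=0$ and to count their zeros,'' and you yourself flag this count as ``the principal obstacle.'' Nothing in the proposal actually performs that count, so the proposition is not proved; what you have written is a (correct) restatement of the difficulty, not a solution of it.

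For comparison, the paper closes this gap by a maneuver that avoids zero-counting at $l=0$ altogether. It observes that equation~(\ref{assSL}) is a Lam\'e equation in which $\lambda$ plays the role of $n(n+1)$ and $l$ enters only through the spectral parameter $h = \frac{(m^2+mn)\lambda - l^2}{m^2+2mn}$. Assuming $\lambda_3(0)<2$, monotonicity and continuity in $l$ (Proposition~\ref{monotonicity}) give a real $l_2$ with $\lambda_3(l_2)=2$; the corresponding eigenfunction has exactly four zeros on $[0,2\pi)$ by Proposition~\ref{SturmProp}, hence it is an eigenfunction of the $n=1$ Lam\'e equation whose eigenvalue is at least $h_3(k)$. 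The analytic core is then the bound $h_3(k)>2$ for all $0<k<1$, which the paper proves variationally: Volkmer's characterization of $h_3$ by the boundary conditions~(\ref{BC}), monotonicity of the Rayleigh quotient~(\ref{R}) in $k$, and the degenerate case $k=1$ computed exactly via a Legendre-polynomial expansion. Feeding $h\geqslant 2$ back into the parameter relation with $\lambda=2$ yields $l_2^2\leqslant -2mn<0$, a contradiction. If you want to complete your own route, you would need to supply an analogue of this quantitative input (the bound on $h_3(k)$, or an equivalent Floquet-discriminant estimate); without it, the proposal remains an outline of precisely the step it set out to fill.
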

Recall that for every $\lambda_i(l)$ with $l>0$ there are two eigenfuctions of the Laplace-Beltrami operator on $M_{m,n}$. This observation completes the proof in the case $mn\equiv 0 \mod 2$. 

If $mn\equiv 1\mod 2$ then one has to take into account the symmetry $(x,y)\mapsto (x+\pi,y+\pi)$, i.e. if $l$ is even then we need to count only $\pi$-periodic solutions of equation~(\ref{assSL}) and if $l$ is odd then we need to count only $\pi$-antiperiodic solutions of~(\ref{assSL}). Application of Proposition~\ref{SturmProp} with $t_0 = \pi,2\pi$ yields the fact that $h_{2i+1}$ and $h_{2i+2}$ are $\pi$-antiperiodic iff $i$ is odd and $\pi$-periodic otherwise. Obvious calculations complete the proof. 

The rest of this section is dedicated to the proof of Proposition~\ref{lambda3}.

\subsection{Lam\'e equation.}
In this section we recall several facts concerning the Lam\'e equation usually written as 
\begin{equation}
\label{Lame}
\frac{d^2\varphi}{dz^2} + (h-n(n+1)k^2\mathrm{sn}^2 z)\varphi = 0.
\end{equation}

We use a trigonometric form of the Lam\'e equation
\begin{equation}
\label{TrigLame}
[1-(k\cos x)^2]\frac{d^2\varphi}{dx^2} + k^2\sin y\cos y\frac{d\varphi}{dy} + [h - n(n+1)(k\cos y)^2]\varphi = 0. 
\end{equation}
Equation~(\ref{TrigLame}) can be obtained from equation~(\ref{Lame}) using the following change of variables
$$
\mathrm{sn} z = \cos y\quad \Leftrightarrow \quad y = \frac{\pi}{2} - \mathrm{am} z,
$$ 
where $\mathrm{am}$ is the Jacobi amplitude function, see e.g. the book~\cite{EMOT}. 

In order to prove Proposition~\ref{lambda3} we need the following proposition. 

\begin{proposition}
Assume $n=1$. Then the eigenvalue $h_3(k)$ is greater than $2$ for every $0<k<1$. 
\end{proposition}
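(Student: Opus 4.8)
The plan is to read the Lam\'e equation~(\ref{Lame}) with $n=1$ (equivalently the trigonometric form~(\ref{TrigLame})) as a periodic Sturm--Liouville problem and to pin down $h_3(k)$ by combining three classical explicit solutions with the oscillation theorem, Proposition~\ref{SturmProp}. First I would record that a direct differentiation shows $\mathrm{dn}\,z$, $\mathrm{cn}\,z$ and $\mathrm{sn}\,z$ solve equation~(\ref{Lame}) with $n=1$ for the values $h=k^2$, $h=1$ and $h=1+k^2$ respectively. Passing to the self-adjoint form and to the variable of the original problem (the real period $4K$ in $z$ corresponds to one full period of the problem in the trigonometric variable), one counts zeros on a period: $\mathrm{dn}\,z$ has none, while $\mathrm{cn}\,z$ and $\mathrm{sn}\,z$ have exactly two each. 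By Proposition~\ref{SturmProp} the zeroless solution is the ground state and the two two-zero solutions form the next pair, so
$$h_0(k)=k^2<h_1(k)=1<h_2(k)=1+k^2,$$
all three strictly below $2$ for $0<k<1$; moreover $h_3(k)>h_2(k)$ and its eigenfunction has four zeros on a period.

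It remains to prove the strict inequality $h_3(k)>2$, which is the heart of the matter. Here I would use that $2k^2\mathrm{sn}^2z$ is the one-gap (finite-gap) Lam\'e potential, whose spectrum on the line is $[k^2,1]\cup[1+k^2,\infty)$, the three functions above being the edges of the bottom band and of the single open gap $(1,1+k^2)$. In this language $h_3(k)$ is exactly the first (doubly degenerate) periodic eigenvalue $\mu_1(k)$ sitting inside the upper band $[1+k^2,\infty)$, so that $h_3(k)>2$ is equivalent to saying that the energy $h=2$ lies strictly below $\mu_1(k)$, i.e.\ strictly in the interior of the upper band. A convenient feature is that at $h=2$ one has $h-2k^2\mathrm{sn}^2z=2\,\mathrm{dn}^2z$, so at this energy the solutions never meet a turning point.

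To make ``$2<\mu_1(k)$'' precise I would compute the Floquet exponent of equation~(\ref{Lame}) at $h=2$ from Hermite's explicit solution $\varphi(\tilde z)=\sigma(\tilde z-a)\,\sigma(\tilde z)^{-1}e^{\zeta(a)\tilde z}$ of the Weierstrass form $\varphi''=(2\wp(\tilde z)+\wp(a))\varphi$, where $e_1,e_2,e_3$ are the roots attached to the modulus $k$ (normalised by $e_1-e_3=1$, $e_2-e_3=k^2$). The dictionary gives $\wp(a)=-h-2e_3$, hence $\wp(a)=-2e_1$ precisely at $h=2$, and the multiplier over the real period equals $\exp(2[\omega_1\zeta(a)-\eta_1 a])$; thus $h=2$ is a band edge (a candidate value of $\mu_1$) only if this exponent is an integer multiple of $\pi i$. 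Equivalently, writing the quasimomentum as the Abelian integral
$$p(h)=\int^h\frac{(s-c)\,ds}{2\sqrt{(s-k^2)(s-1)(s-1-k^2)}},$$
on the elliptic spectral curve, with $c\in(1,1+k^2)$ fixed by requiring no increment of $p$ across the gap, the inequality $2<\mu_1(k)$ turns into an explicit inequality between complete and incomplete elliptic integrals of the first and second kind. The clean normalisation $\int_0^K\mathrm{dn}(z,k)\,dz=\pi/2$ shows that the leading value of the quasimomentum over one period at $h=2$ is $\pi\sqrt2$, lying strictly between the band-bottom value $\pi$ and the value $2\pi$ attained at $\mu_1$; as a robustness check and an alternative ending, $h_3(0)=4$ in the free case and $h_3(k)\to2$ as $k\to1$, so once $h=2$ is known never to be a band edge, a continuity argument in $k$ closes the proof.

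The main obstacle is exactly this last point: controlling the quasimomentum (equivalently the Bloch multiplier) at $h=2$ uniformly in $k$. The difficulty concentrates near $k=1$, where $h_3(k)\to2$ and the real period $4K\to\infty$, so the inequality becomes non-strict in the limit; there elementary Sturm comparison or a bare zero-count, which otherwise give the expected $2\sqrt2<4$ zeros on a period, are not by themselves conclusive, and one genuinely needs the exact elliptic-integral expression for $p(h)$ (or a careful non-crossing argument) to keep $h=2<\mu_1(k)$ strict.
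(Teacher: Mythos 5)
Your setup is correct and is a genuinely different route from the paper's: you exploit the integrable (one-gap) structure of the Lam\'e potential, identify $h_0=k^2$, $h_1=1$, $h_2=1+k^2$ from the explicit solutions $\mathrm{dn}\,z$, $\mathrm{cn}\,z$, $\mathrm{sn}\,z$ together with the zero counts of Proposition~\ref{SturmProp}, and correctly recognize that $h_3(k)=h_4(k)$ is the first (doubly degenerate) periodic eigenvalue inside the band $[1+k^2,\infty)$, so that the claim reduces to a strict inequality on the Floquet multiplier (quasimomentum) at the energy $h=2$. The paper instead argues variationally: it converts ``$h_3(k)=2$'' into ``$3$ is the lowest eigenvalue of the auxiliary problem $-(p\varphi')'+p\varphi=\lambda p\varphi$ with Volkmer's boundary conditions,'' shows the Rayleigh quotient is strictly decreasing in $k$, solves the limiting case $k=1$ exactly via Legendre polynomials (infimum $=3$, attained at $P_1$), and concludes $h_3(k)\ne 2$ for $0<k<1$, finishing by continuity from $h_3(0)=4$.

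However, your argument has a genuine gap exactly at the decisive step, and you say so yourself. The inequality you need --- that the quasimomentum per period at $h=2$ stays strictly below the value $2\pi$ attained at $\mu_1(k)$, uniformly for $k\in(0,1)$ --- is never established. The value $\pi\sqrt2$ you quote is only the WKB leading term $\int\sqrt{h-2k^2\mathrm{sn}^2 z}\,dz$, not the exact quasimomentum; already at the band bottom $h=1+k^2$ the WKB integral $\int_0^{2K}\sqrt{\mathrm{dn}^2z+k^2\mathrm{cn}^2z}\,dz$ strictly exceeds the exact value $\pi$, so the approximation has an error of indefinite usefulness, and nothing in your sketch controls it. The place where control is needed is precisely where it is hardest: as $k\to1$ one has $K\to\infty$ and $h_3(k)\to2$, so the target inequality degenerates and no zero-count, Sturm comparison, or leading-order estimate can remain conclusive. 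The implicit normalization constant $c\in(1,1+k^2)$ in your Abelian integral (fixed by the vanishing of the gap period) is also left undetermined, so the ``explicit inequality between elliptic integrals'' you would need is never even written down, let alone proved. By contrast, the paper's monotonicity of the Rayleigh quotient in $k$, combined with the exactly solvable $k=1$ endpoint, is precisely the mechanism that turns the degenerate limiting equality into a strict inequality for every $k<1$; your proposal lacks any analogue of this mechanism, so as written it proves the proposition only modulo its hardest claim.
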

\begin{proof}
According to the paper~\cite{Volkmer} the number $h_3(k)$ can be characterized as the first eigenvalue of the problem~(\ref{TrigLame}) with boundary conditions
\label{h3}
\begin{equation}
\label{BC}
\varphi(y + \pi) \equiv \varphi(y) \qquad \varphi(y) \equiv -\varphi\left(\frac{\pi}{2}-y\right).
\end{equation}
First let us rewrite equation~(\ref{TrigLame}) in the form 
\begin{equation}
\label{SL1}
\frac{d}{dx}\left(\sqrt{1-(k\cos x)^2}\frac{d\varphi}{dx}\right) + \frac{h - 2(k\cos x)^2}{\sqrt{1 - (k\cos x)^2}}\varphi = 0.
\end{equation}

Let us denote $p(x) = \sqrt{1-(k\cos x)^2}$ and an auxiliary Sturm-Liouville problem of the form
\begin{equation}
\label{SL2}
-\frac{d}{dx}\left(p(x)\frac{d\varphi}{dx}\right) + p(x)\varphi = \lambda p(x)\varphi. 
\end{equation}
It easy to see that a function $\varphi(x)$ is a solution of equation~(\ref{SL1}) with $h(k) = 2$ if and only if $\varphi(x)$ is a solution of equation~(\ref{SL2}) with $\lambda(k) = 3$.

Therefore $h_3(k)\ne 2$ iff the Rayleigh quotient
\begin{equation}
\label{R}
R_k[f] = \frac{\displaystyle\int\limits_0^\pi p(k,x)((f')^2+f^2)\,dx}{\displaystyle\int\limits_0^\pi p(k,x)f^2\,dx}
\end{equation}
is greater than $3$ for any functions $f$ satisfying condition~(\ref{BC}). Indeed, by variational characterization of eigenvalues the first eigenvalue $\hat\lambda_0(k)$ of the problem 
~(\ref{SL2}) with boundary conditions~(\ref{BC}) is equal to $\inf R[f]$, where infimum is taken over the subspace $\mathcal L$ of functions $f\in H^1[0,2\pi]$ satisfying conditions~(\ref{BC}).

Then let us remark that the Rayleigh quotient~(\ref{R}) is a decreasing function of $k$. Indeed, if $k_1>k_2$ then $p(k_1,x)<p(k_2,x)$ and we have 
$$
\int\limits_0^\pi p(k_1,x)(f')^2\,dx< \int\limits_0^\pi p(k_2,x)(f')^2\,dx.
$$ 
By adding to both sides $\int\limits_0^\pi p(k_1,x)f^2\,dx\int\limits_0^\pi p(k_2,x)f^2\,dx$, we obtain
$$
\int\limits_0^\pi p(k_1,x))((f')^2+f^2)\int\limits_0^\pi p(k_2,x)f^2\,dx < \int\limits_0^\pi p(k_2,x)((f')^2+f^2)\,dx\int\limits_0^\pi p(k_1,x)f^2\,dx.
$$
This inequality implies $R_{k_1}[f] < R_{k_2}[f]$. 

For $k=1$ the potential $p(k,x)$ becomes $\sin x$ on the segment $[0,\pi]$. Any function $f\in \mathcal L$ can be expressed in the form $g(\cos x)$, where $g\in L^2[-1,1]$ is an odd function and $g'(t)$ is integrable with the weight function $\sqrt{1-t^2}$. Consider the normalized Legendre polynomials $\sqrt\frac{n}{2}P_n(t)$, which form an orthonormal basis in $L^2[-1,1]$. Let us recall that the Legendre polynomials satisfy the Legendre equation,
$$
\frac{d}{dt}\left((1-t^2)\frac{d P_n(t)}{dt}\right) = -n(n+1)P_n(t).
$$ 
Suppose that 
$$
g(t) = \sum\limits_{i=1}^\infty a_n \sqrt\frac{n}{2}P_n(t)
$$ 
is a Fourier expansion for $g(t)$. Then $g'(t)\sqrt{1-t^2}\in L^2[-1,1]$ and the associated Legendre functions $P_n^1(t) = \sqrt{1-t^2}P_n'(t)$ form an orthonormal basis in $L^2$ and let
$$
g'(t)\sqrt{1-t^2} = \sum\limits_{i=1}^\infty b_m \sqrt\frac{m}{2}P_m^1(t).
$$ 
Recall that 
$$
\int\limits_{-1}^1P_n^1(t)P_m^1(t)\,dt = \frac{2n(n+1)}{2n+1}\delta_{m,n}. 
$$
If by $(\cdot,\cdot)$ we denote the $L^2$-inner product in $\mathcal L$, then
\begin{equation*}
\begin{split}
& n(n+1)b_n = \left(g'(t)\sqrt{1-t^2},\sqrt{\frac{m}{2}}P_m^1(t)\right) = -\left(g(t),\sqrt\frac{m}{2}\frac{d}{dt}\left((1-t^2)\frac{dP_n(t)}{dx}\right)\right) =\\
& \sqrt\frac{m}{2}(g(t),n(n+1)P_n(t)) = n(n+1) a_n. 
\end{split}
\end{equation*}
It follows that $a_n  = b_n$.
Now the Raleigh quotient~(\ref{R}) in terms of $g(t)$ has the form 
$$
R_1[g] = \frac{\displaystyle\int\limits_{-1}^1 (1-t^2)g'^2(t) + g^2(t)\,dt}{\displaystyle\int\limits_{-1}^1 g^2(t)\,dt}.
$$
Substituting the series for $g(t)$ and $g'(t)\sqrt{1-t^2}$ into this quotient we see that infimum is reached on $g(t) = P_1(t) = t$ and the quotient is equal to $3$. Thus, $\lambda(k)>3$ for $0<k<1$.

Then it is easy to see that $h_3(0) = 4$ and $h_3(k)$ depend continuosly on $k$. Since $h_3(k)\ne 2$, one has $h_3(k)>2$ for every $k\in (0,1)$. 
\end{proof}

\subsection{Proof of Proposition~\ref{lambda3}.}
Let us first remark that equation~(\ref{assSL}) is the Lam\'e equation with parameters
$$
k^2 = \frac{m^2-n^2}{m^2 + 2mn}, \qquad h = \frac{(m^2 + mn)\lambda - l^2}{m^2+2mn},\qquad n(n+1) = \lambda.
$$
Suppose the contradiction to the statement, i.e. $\lambda_3(0)<2$. Then, since $\lambda_3(n)>\lambda_2(n) = 2$, there exists a number $l_2$ such that $\lambda_3(l_2) = 2$. Then for $l=l_2$ equation~(\ref{assSL}) has a solution with $4$ zeroes on $[0,2\pi)$. Therefore, so does the Lam\'e equation. But such a solution corresponds to either $h_3(k)$ or $h_4(k)$ and one has 
$$
h_4(k)\geqslant h_3(k) \geqslant 2 \quad\mathrm{or}\quad  \frac{(m^2 + mn)\lambda - l-2^2}{m^2+2mn} \geqslant 2,
$$ 
which implies $l_2^2<0$. We obtain a contradiction.
$\qed$

\subsection{Value of the corresponding functional.}
In this section we prove Proposition~\ref{MainProposition}. We start with the formula for the volume of $M_{m,n}$.
\label{proof1}
\begin{equation}
\begin{split}
& \vol(M_{m,n}) = \frac{2\pi}{\sqrt{2}}\int\limits_0^{2\pi}\frac{m^2+2mn+n^2 - (m^2-n^2)\cos 2x}{\sqrt{m^2+4mn+n^2 - (m^2-n^2)\cos 2x}}\,dx = \\
& 8\pi\int\limits_0^{\frac{\pi}{2}}\frac{m^2+mn - (m^2-n^2)\sin^2x}{\sqrt{m^2+2mn - (m^2-n^2)\sin^2 x}}\,dx = \\
& 8\pi\left(\sqrt{m^2+2mn}E\left(\sqrt{\frac{m^2-n^2}{m^2+2mn}}\right) - \frac{mn}{\sqrt{m^2+2mn}}K\left(\sqrt{\frac{m^2-n^2}{m^2+2mn}}\right)\right).
\label{Vol}
\end{split}
\end{equation}
If $mn\equiv 1\mod 2$ then one has to take into account the symmetry $(x,y)\mapsto (x+\pi,y+\pi)$, hence this number has to be divided by $2$.

Now, following~\cite{Karpukhin1}, we prove the non-maximality of the metric on $M_{m,n}$.
Let us recall two propositions from the paper~\cite{Karpukhin1}.
\begin{proposition}
The following inequality holds
$$
\sup \Lambda_n(\mathbb{T}^2,g)> 8\pi n.
$$
\label{LowerBound}
\end{proposition}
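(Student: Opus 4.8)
The plan is to bound $\sup\Lambda_n(\mathbb{T}^2,g)$ from below by a degeneration argument: one attaches small round spheres to a torus and lets the connecting necks pinch. Topologically $\mathbb{T}^2\#\mathbb{S}^2\cong\mathbb{T}^2$, so every surface produced this way is still a torus, and each attached round sphere contributes the Hersch value $8\pi=\sup\Lambda_1(\mathbb{S}^2,g)$ (see~\cite{Hersh}) to the functional.

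First I would establish the gluing inequality
$$
\sup\Lambda_k(\mathbb{T}^2,g)\geqslant \sup\Lambda_{k-1}(\mathbb{T}^2,g)+8\pi\qquad (k\geqslant 2).
$$
Fix a metric $g_0$ on $\mathbb{T}^2$ with $\Lambda_{k-1}(\mathbb{T}^2,g_0)$ arbitrarily close to the supremum, and write $\mu=\lambda_{k-1}(\mathbb{T}^2,g_0)$, $A=\area(\mathbb{T}^2,g_0)$. Take a round sphere rescaled so that its first nonzero eigenvalue equals $\mu$; by the Hersch inequality its area is then $B=8\pi/\mu$. Glue this sphere to $(\mathbb{T}^2,g_0)$ along a neck of width $\varepsilon$ to obtain a family of metrics $g_\varepsilon$ on the torus. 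The key point is the behaviour of the low spectrum as $\varepsilon\to 0$. On the disjoint union $\mathbb{T}^2\sqcup\mathbb{S}^2$ the kernel of the Laplacian is two-dimensional (locally constant functions); under gluing it splits into the genuine constant, $\lambda_0(g_\varepsilon)=0$, and a single eigenvalue $\lambda_1(g_\varepsilon)\to 0$, while every positive eigenvalue of the union is recovered as a limit of the remaining eigenvalues of $g_\varepsilon$. Because the sphere was rescaled so that its first nonzero eigenvalue equals $\mu=\lambda_{k-1}(\mathbb{T}^2,g_0)$, the value $\mu$ occupies exactly the slot that becomes $\lambda_k(g_\varepsilon)$, so that $\lambda_k(g_\varepsilon)\to\mu$ and $\area(g_\varepsilon)\to A+B$. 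Passing to the limit,
$$
\Lambda_k(\mathbb{T}^2,g_\varepsilon)\longrightarrow \mu\left(A+\frac{8\pi}{\mu}\right)=\mu A+8\pi=\Lambda_{k-1}(\mathbb{T}^2,g_0)+8\pi,
$$
which yields the gluing inequality after taking the supremum over $g_0$.

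Second, I would invoke the base case $\sup\Lambda_1(\mathbb{T}^2,g)=\dfrac{8\pi^2}{\sqrt{3}}$, realized by the flat equilateral torus (see~\cite{Nadirashvili1} and the Remark above), and iterate the gluing inequality $(n-1)$ times to get
$$
\sup\Lambda_n(\mathbb{T}^2,g)\geqslant \frac{8\pi^2}{\sqrt{3}}+(n-1)\,8\pi.
$$
Since $\pi>\sqrt{3}$ we have $\dfrac{8\pi^2}{\sqrt{3}}>8\pi$, hence
$$
\sup\Lambda_n(\mathbb{T}^2,g)\geqslant \frac{8\pi^2}{\sqrt{3}}+(n-1)\,8\pi>8\pi+(n-1)\,8\pi=8\pi n,
$$
which is the desired strict inequality.

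The main obstacle is the degeneration step, that is, proving that the first $n$ eigenvalues of $g_\varepsilon$ converge to the spectrum of the disjoint union. The clean way is to use the min--max characterization in both directions: test functions supported on the rescaled pieces give the upper estimate on each limiting eigenvalue, while a capacity/energy estimate on the thin neck shows that the extra eigenvalue created by the gluing really tends to $0$ and does not contaminate $\lambda_k(g_\varepsilon)$. One must also check that the normalization and the total area pass correctly to the limit, and that the spurious small eigenvalue occupies exactly the single slot $\lambda_1$ freed up by the collapsing two-dimensional kernel, so that $\lambda_k(g_\varepsilon)$ faithfully tracks the $k$-th eigenvalue of the union.
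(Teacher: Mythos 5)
The paper contains no proof of this proposition: it is recalled verbatim from~\cite{Karpukhin1}, where it is deduced from a theorem of Colbois and El Soufi on the conformal spectrum (the inequality $\sup\Lambda_k \geqslant \sup\Lambda_{k-1} + 8\pi$, obtained exactly by conformally attaching small spheres) together with Nadirashvili's value $\sup\Lambda_1(\mathbb{T}^2,g)=8\pi^2/\sqrt{3}$ from~\cite{Nadirashvili1}. Your proposal reconstructs precisely this argument: your gluing inequality is the Colbois--El Soufi inequality, your base case and final arithmetic ($\pi>\sqrt{3}$) match the cited proof, and your spectral bookkeeping is sound --- the rescaled round sphere contributes the value $\mu$ with multiplicity three, all sitting at or above the first $k-1$ positive eigenvalues of the torus, so the $k$-th positive slot of the disjoint union is indeed $\mu$, and the limit $\Lambda_k \to \Lambda_{k-1}(g_0)+8\pi$ follows.

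The genuine gap is the degeneration step, which you flag yourself as the main obstacle. The claim that the spectrum of $g_\varepsilon$ converges to that of the disjoint union is the \emph{entire} analytic content of the key lemma, and it cannot be waved through: the direction you actually need is the lower bound $\liminf_{\varepsilon\to 0}\lambda_k(g_\varepsilon)\geqslant \mu$, and min--max test functions only ever give \emph{upper} bounds on eigenvalues. One must show, via capacity estimates on the neck, that exactly one spurious eigenvalue falls toward $0$ and that no other eigenvalue drifts below $\mu$; this is a nontrivial theorem (spectral convergence under collapsing necks, due to Ann\'e, or the conformal-attachment construction of Colbois and El Soufi, who engineer the argument so as to avoid full spectral convergence). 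As written, your proof delegates its only hard step to an unproven assertion; supplemented by a citation of either of those results, the argument is complete and coincides with the proof behind the statement in~\cite{Karpukhin1}.
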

\begin{proposition}
For every $k\in[0,1]$ one has
$$
K(k) - \frac{2}{2-k^2}E(k)\geqslant 0.
$$
\label{EKest}
\end{proposition}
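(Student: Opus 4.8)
The plan is to prove the equivalent inequality $f(k) := (2-k^2)K(k) - 2E(k) \geq 0$ on $[0,1]$, obtained by multiplying the claimed inequality by the strictly positive factor $2-k^2$, and then to establish it by a monotonicity argument anchored at $k=0$. First I would record the endpoints. Since $K(0)=E(0)=\tfrac{\pi}{2}$, one has $f(0)=2\cdot\tfrac{\pi}{2}-2\cdot\tfrac{\pi}{2}=0$, so the inequality holds with equality at $k=0$; at the other end $K(1)=+\infty$ while $E(1)=1$, so $f(1)=+\infty$ and nothing needs to be checked there. It therefore suffices to show that $f$ is nondecreasing on $(0,1)$.

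The main computation is the derivative of $f$. Using the classical formulas $\frac{dK}{dk}=\frac{E-(1-k^2)K}{k(1-k^2)}$ and $\frac{dE}{dk}=\frac{E-K}{k}$, a direct simplification — multiplying through by $k(1-k^2)$ and collecting separately the coefficients of $E$ and of $K$ — yields
$$
f'(k) = \frac{k\bigl(E(k)-(1-k^2)K(k)\bigr)}{1-k^2}.
$$
Thus the sign of $f'$ is governed entirely by the quantity $E(k)-(1-k^2)K(k)$, and the problem reduces to showing that this classical combination is nonnegative.

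Finally I would establish $E(k)-(1-k^2)K(k)\geq 0$ directly from the integral representations. Combining the two integrands over the common weight $\frac{1}{\sqrt{1-x^2}}$ and simplifying the numerator gives
$$
E(k)-(1-k^2)K(k) = \int_0^1 \frac{k^2(1-x^2)}{\sqrt{1-x^2}\,\sqrt{1-k^2x^2}}\,dx = k^2\int_0^1\frac{\sqrt{1-x^2}}{\sqrt{1-k^2x^2}}\,dx \geq 0,
$$
so $f'(k)\geq 0$ for every $k\in(0,1)$. Combined with $f(0)=0$, the fundamental theorem of calculus gives $f(k)\geq 0$ on $[0,1)$, and since the case $k=1$ was already handled this completes the proof.

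I expect no serious obstacle. The only slightly delicate point is the algebra simplifying $f'(k)$, where one must verify carefully that the coefficient of $K$ collapses to $-k^2(1-k^2)$ and that of $E$ to $k^2$; this is elementary but easy to slip on. One could instead attempt the purely direct evaluation $f(k)=k^2\int_0^1\frac{2x^2-1}{\sqrt{1-x^2}\,\sqrt{1-k^2x^2}}\,dx$, but here the integrand changes sign at $x=1/\sqrt{2}$, so that route would demand extra work to show the integral is nonnegative; the monotonicity argument sidesteps this entirely and is the cleaner choice.
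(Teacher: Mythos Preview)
Your argument is correct. The algebra for $f'(k)$ checks out: multiplying $f'(k)$ through by $k(1-k^2)$ one gets coefficient $k^2$ for $E$ and $-k^2(1-k^2)$ for $K$, exactly as you claim, and the integral identity for $E(k)-(1-k^2)K(k)$ is immediate from the defining integrals. The endpoint $k=1$ is handled cleanly since $K(1)=+\infty$.

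As for comparison with the paper: there is nothing to compare against, because the paper does not prove this proposition at all --- it merely quotes it (together with the preceding Proposition~\ref{LowerBound}) from the author's earlier preprint~\cite{Karpukhin1}. Your self-contained monotonicity argument is therefore strictly more than what the paper provides, and is a clean, short way to make the manuscript independent of that external reference for this particular estimate.
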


By Proposition~\ref{LowerBound} in order to prove the non-maximality of tori $M_{m,n}$ it is sufficient to prove the following proposition.
\begin{proposition} If $mn\equiv 1 \mod 2$ and $m\ne 1$ then the following inequality holds
$$
8\pi(2(m+n)-3) \geqslant \Lambda_{2m+2n-3}(M_{m,n}).
$$
If $mn\equiv 0 \mod 2$ then the following inequality holds
$$
8\pi(4(m+n)-3) \geqslant \Lambda_{4m+4n-3}(M_{m,n}).
$$
\end{proposition}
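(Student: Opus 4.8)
The plan is to reduce both claimed inequalities to a single estimate on the quantity $F(m,n) = \sqrt{m^2+2mn}\,E(k) - \frac{mn}{\sqrt{m^2+2mn}}\,K(k)$, where $k=\sqrt{(m^2-n^2)/(m^2+2mn)}$. By Proposition~\ref{MainProposition} the relevant functional equals $16\pi F(m,n)$ when $mn\equiv 0\bmod 2$ and $8\pi F(m,n)$ when $mn\equiv 1\bmod 2$, so after dividing by $8\pi$ the two inequalities become $F(m,n)\leqslant 2(m+n)-\frac{3}{2}$ in the even case and $F(m,n)\leqslant 2(m+n)-3$ in the odd case; the latter is the stronger demand.

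First I would use Proposition~\ref{EKest} to remove the $K$-term. Since its coefficient in $F(m,n)$ is negative, the inequality $K(k)\geqslant\frac{2}{2-k^2}E(k)$ gives $F(m,n)\leqslant\bigl(\sqrt{m^2+2mn}-\frac{2mn}{(2-k^2)\sqrt{m^2+2mn}}\bigr)E(k)$. Substituting $2-k^2=(m^2+4mn+n^2)/(m^2+2mn)$ and simplifying (the numerator $m^2+4mn+n^2-2mn$ is exactly $(m+n)^2$) collapses the bracket, yielding $F(m,n)\leqslant C(m,n)\,E(k)$ with $C(m,n)=\sqrt{m^2+2mn}\,\dfrac{(m+n)^2}{m^2+4mn+n^2}$. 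This cancellation is the crux: simply dropping the $K$-term would leave the coefficient $\sqrt{m^2+2mn}$, which turns out to be too large to settle the smallest cases.

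Next I would feed in the elementary bounds $\sqrt{m^2+2mn}=\sqrt{(m+n)^2-n^2}\leqslant m+n$, the inequality $(m+n)^2\leqslant m^2+4mn+n^2$ (equivalent to $mn\geqslant 0$), and $E(k)\leqslant E(0)=\frac{\pi}{2}$, to obtain the crude estimate $F(m,n)\leqslant\frac{\pi}{2}(m+n)$. Because $\frac{\pi}{2}<2$, this already implies $F(m,n)\leqslant 2(m+n)-3$ once $(2-\frac{\pi}{2})(m+n)\geqslant 3$, that is $m+n\geqslant 7$, and similarly $F(m,n)\leqslant 2(m+n)-\frac{3}{2}$ once $m+n\geqslant 4$. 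Hence only finitely many coprime pairs remain to be examined: the odd pairs $(3,1)$ and $(5,1)$, and the even pair $(2,1)$.

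For those three pairs I would keep $C(m,n)$ exact rather than bounding it by $m+n$, using $F(m,n)\leqslant C(m,n)\frac{\pi}{2}$ together with $E(k)\leqslant\frac{\pi}{2}$; one then verifies the explicit inequalities $C(3,1)\frac{\pi}{2}<5$, $C(5,1)\frac{\pi}{2}<9$ and $C(2,1)\frac{\pi}{2}<\frac{9}{2}$, which are exactly the required bounds $2(m+n)-3$, $2(m+n)-3$ and $2(m+n)-\frac{3}{2}$. Each reduces to a comparison of $\pi$ with an explicit rational number, so no sharp control of the elliptic integrals is needed. The main obstacle is thus the algebra of the cancellation step: Proposition~\ref{EKest} is calibrated precisely so that the troublesome $K$-term annihilates part of the $E$-term and leaves a coefficient no larger than $m+n$. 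Once that is in hand, everything reduces to the soft comparison $\frac{\pi}{2}<2$ and the short finite check above.
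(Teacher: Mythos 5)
Your proposal is correct, and up to the key cancellation it coincides with the paper's proof: the paper likewise expresses the functional as $8\pi F(m,n)$ resp.\ $16\pi F(m,n)$ via formula~(\ref{Vol}), applies Proposition~\ref{EKest} with the same $k$, and obtains exactly your coefficient $C(m,n)=\sqrt{m^2+2mn}\,(m+n)^2/(m^2+4mn+n^2)$. The endgame, however, is genuinely different. The paper divides inequality~(\ref{2}) by $m$ and then gives the cancellation away again: it discards the factor $(m+n)^2/(m^2+4mn+n^2)\leqslant 1$ and reduces the general case to inequality~(\ref{3}), valid for $m\geqslant 7$, which leaves eight exceptional pairs --- $\{3,1\},\{5,1\},\{5,3\},\{7,1\},\{7,3\},\{7,5\}$ in the odd case and $\{2,1\},\{3,2\}$ in the even case --- and these are disposed of only by appeal to the numerical tables of elliptic integrals in~\cite{BF}. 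You instead keep the cancellation, bound $C(m,n)\leqslant m+n$ by your two elementary inequalities, and get thresholds in terms of $m+n$ (namely $m+n\geqslant 7$ and $m+n\geqslant 4$), so that only $\{3,1\},\{5,1\},\{2,1\}$ remain; you then finish these with $E\leqslant\pi/2$ and the exact value of $C(m,n)$, the three checks amounting to $\pi<11\sqrt{15}/12$, $\pi<23/\sqrt{35}$ and $\pi<13\sqrt{2}/4$ (comparisons with algebraic numbers, not rational ones as you wrote, but squaring makes them rational). What your route buys is self-containedness: the table look-up, the least rigorous step in the paper's argument, disappears entirely. Your remark about the crux is also precisely right: for the general case one may drop the $K$-term outright (which is what the paper effectively does, since $F\leqslant\frac{\pi}{2}\sqrt{m^2+2mn}\leqslant\frac{\pi}{2}(m+n)$ already), and the cancellation from Proposition~\ref{EKest} is truly needed only for the smallest pairs, where $\frac{\pi}{2}\sqrt{m^2+2mn}$ alone is too large, e.g.\ $\frac{\pi}{2}\sqrt{15}\approx 6.08>5$ for $\{3,1\}$.
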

\begin{proof}
Assume $mn\equiv 1\mod 2$. Then by formula~(\ref{Vol})
\begin{equation}
\begin{split}
&\Lambda_{2m+2n-3}(M_{m,n}) = 2\vol (M_{m,n}) = \\
&8\pi\left(\sqrt{m^2+2mn}E\left(\sqrt{\frac{m^2-n^2}{m^2+2mn}}\right) - \frac{mn}{\sqrt{m^2+2mn}}K\left(\sqrt{\frac{m^2-n^2}{m^2+2mn}}\right)\right).
\end{split}
\label{1}
\end{equation}
Let us apply Proposition~\ref{EKest} with $k = \sqrt{\dfrac{m^2-n^2}{m^2+2mn}}$. Then we have
$$
-\frac{m^2+4mn+n^2}{2m^2+4mn}K(k)\leqslant E(k).
$$
Applying this inequality to formula~(\ref{1}) we have
$$
\Lambda_{2m+2n-3} \leqslant 8\pi\sqrt{m^2+2mn}\left(1 - \frac{2mn}{m^2+4mn+n^2}\right)E(k).
$$
Therefore in order to prove the first inequality it is sufficient to obtain the inequality
\begin{equation}
\sqrt{m^2+2mn}\left(1 - \frac{2mn}{m^2+4mn+n^2}\right)E(k) \leqslant 2m+2n - 3.
\label{2}
\end{equation}
Let us divide both parts of inequality~(\ref{2}) by $m$ and denote the ratio $\dfrac{n}{m}$ by $x\in[0,1]$. Then formula~(\ref{2}) transforms into
$$
\sqrt{1+2x}\left(1 - \frac{2x}{1 + 4x+ x^2}\right) E\left(\sqrt{\frac{1 - x^2}{1+2x}}\right) \leqslant 2(1+x) - \frac{3}{m}.
$$
Since $E(\hat k)\leqslant \dfrac{\pi}{2}$ for each $\hat k\in[0,1]$, this inequality could be obtained from the following one
\begin{equation}
\frac{6}{m} \leqslant 4(1+k) - \pi\sqrt{1 + 2k}.  
\label{3}
\end{equation}
Inequality~(\ref{3}) holds for $m\geqslant 7$. Thus we have several exceptional cases: $\{m,n\} = \{3,1\},\,\{5,1\}\,\{5,3\}\,\{7,1\},\,\{7,3\},\,\{7,5\}$. For these cases
inequality~(\ref{2}) can be verified explicitly using the tables of elliptic integrals in the book~\cite{BF}.

Proof of the second inequality is obtained in the same way. There are also exceptional cases: $\{m,n\} = \{2,1\}, \,\{3,2\}$.   
\end{proof}

\subsection*{Acknowledgements.}
The author thanks A.~V.~Penskoi for fruitful discussions on spectral geometry and the help in the preparation of the manuscript. The author is also grateful to A. E. Mironov for bringing author's attention to the paper~\cite{Mironov}. 

The research of the author was partially supported by Dobrushin Fellowship.

\textsc{{Department of Geometry and Topology, Faculty of Mechanics and Mathematics, Moscow State University, Leninskie Gory, GSP-1, 
119991, Moscow, Russia}}

\smallskip

\textit{{and}}

\smallskip

\textsc{{Independent University of Moscow, Bolshoy Vlasyevskiy pereulok 11, 119002, Moscow, Russia}}

\smallskip

\textit{E-mail address:} \texttt{karpukhin@mccme.ru}

\begin{thebibliography}{99}
\bibitem{BU} S. Bando, H. Urakawa, Generic properties of the eigenvalue of Laplacian for compact Riemannian manifolds. \textit{T\^ohoku Math. J.} \textbf{35}:2 (1983), 155 - 172.
\bibitem{Berger} M. Berger, Sur les premi\`eres valeurs propres des var\'et\'es Riemanniennes, \textit{Compositio Math.} \textbf{26} (1973), 129-149.
\bibitem{BF} P. Byrd, M. Friedman. \textit{Handbook of Elliptic Integrals for Engineers and Scientists,} Springer Verlag,
New York-Heidelberg-Berlin, 1971.
\bibitem{CL} E. A. Coddington, N. Levinson. \textit{Theory of ordinary differential equations,} McGraw-Hill Book Company, Inc., New York-Toronto-London, 1955.
\bibitem{EGJ} A. El Soufi, H. Giacomini, M. Jazar, A unique extremal metric for the least eigenvalue
of the Laplacian on the Klein bottle. \textit{Duke Math. J.} \textbf{135}:1 (2006), 181-202. Preprint
\texttt{arXiv:math/0701773}.\bibitem{ElSoufiIlias1} A. El Soufi, S. Ilias, Riemannian manifolds admitting isometric immersions by their first eigenfunctions. \textit{Pacific. J. Math.} \textbf{195}:1 (2000), 91-99.
\bibitem{ElSoufiIlias2} A. El Soufi, S. Ilias, Laplacian eigenvalues functionals and metric deformations on compact manifolds. \textit{J. Geom. Phys.} \textbf{58}:1 (2008), 89-104. Preprint \texttt{arXiv:math/0701777}.
\bibitem{EMOT} A. Erdelyi, W. Magnus, F. Oberhettinger, F. Tricomi. {\it Higher transcendental functions}, Vol. III. McGraw-Hill Book Company Inc., New York-Toronto-London, 1955.
\bibitem{Haskins} M. Haskins, Special Lagrangian cones. {\it American J. of Math.} {\bf 126}:4 (2004), 845-871.
\bibitem{Henrot} A. Henrot. \textit{Extremum problems for eigenvalues of elliptic operators,} Birkh\"auser Verlag, Basel-Boston-Berlin, 2006.
\bibitem{Hersh} J. Hersch, Quatre propri\'et\'es isop\'erim\'etriques de membranes sph\'eriques homog\`enes. \textit{C. R.
Acad. Sci. Paris S\'er A-B} \textbf{270} (1970), A1645-A1648.
\bibitem{JNP} D. Jakobson, N. Nadirashvili, I. Polterovich, Extremal Metric for the First Eigenvalue on a
Klein Bottle. \textit{Canad. J. Math.} \textbf{58}:2 (2006), 381-400. Preprint \texttt{arXiv:math/0311484}.
\bibitem{Joyce} D. Joyce, Special Lagrangian $m$-folds in $\mathbb{C}^m$ with symmetries. {\it Duke Math. J.} {\bf 115} (2002), 1-51. Preprint {\tt arXiv:math/0008021}.
%\bibitem{Ince} E. L. Ince, The periodic Lam\'e equation, {\it Proc. Royal Soc. Edinburgh} {\bf 60} (1940), 47-63.
\bibitem{Karpukhin1} M. A. Karpukhin,  On maximality of extremal metrics on torus and Klein bottle. Preprint {\tt arXiv:1210.8122}.
\bibitem{Karpukhin2} M. A. Karpukhin, Spectral properties of bipolar surfaces to Otsuki tori. Submitted to the {\it European J. of Spectral Theory}. Preprint \texttt{arXiv:1205.6316}.
\bibitem{KN} S. Kobayashi, K. Nomizu, \textit{Foundations of differential geometry,} Vol. II, Interscience Publishers John Wiley \& Sons, Inc., New York-London-Sydney, 1969.
\bibitem{Korevaar} N. Korevaar, Upper bounds for eigenvalues of conformal metrics. \textit{J. Differential Geom.} \textbf{37}:1 (1993), 79-93.
\bibitem{Lapointe} H. Lapointe, Spectral properties of bipolar minimal surfaces in $\mathbb{S}^4$. \textit{Differential Geom. Appl.} \textbf{26}:1 (2008), 9-22. Preprint \texttt{arXiv:math/0511443}.
\bibitem{LiYau} P. Li, S.-T. Yau, A new conformal invariant and its applications to the Willmore conjecture
and the first eigenvalue of compact surfaces. \textit{Invent. Math.} \textbf{69}:2 (1982), 269-291.
\bibitem{Mironov} A. E. Mironov, Finite-gap minimal Lagrangian surfaces in $\mathbb{CP}^2$. \textit{OCAMI Studies Series} \textbf{3} (2010), 185-196. Preprint {
\tt arXiv:1005.3402}.
\bibitem{Mironov2} A. E. Mironov, New examples of Hamiltonian-minimal and minimal Lagrangian submanifolds in $\mathbb{C}^n$ and $\mathbb{CP}^n$. {\it Mat. Sbornik} {\bf 195} (2004), 89-102 (Russian); {\it Sbornik Math.} {\bf 195},  85-96 (English Translation).
\bibitem{Nadirashvili1} N. Nadirashvili, Berger's isoperimetric problem and minimal immersions of surfaces. \textit{Geom. Funct. Anal} \textbf{6}:5 (1996), 877-897. 
\bibitem{Nadirashvili2} N. Nadirashvili, Isoperimetric inequality for the second eigenvalue of a sphere. \textit{J. Differential
Geom.} \textbf{61}:2 (2002), 335-340.
\bibitem{PenskoiLawson} A. V. Penskoi, Extremal spectral properties of Lawson tau-surfaces and the Lam\'e equation. \textit{Moscow Math. J.} \textbf{12}:1 (2012), 173-192. Preprint \texttt{arXiv:1009.0285}.
\bibitem{PenskoiOtsuki} A. V. Penskoi, Extremal spectral properties of Otsuki tori. To appear in \textit{Matematicsche Nachrichten}. Preprint \texttt{arXiv:1108.5160}.
\bibitem{Simons} J. Simons, Minimal varieties in Riemannian manifolds. {\it Ann. of Math.} {\bf 88}:2 (1968), 62-105.
\bibitem{Volkmer} H. Volkmer, Coexistance of periodic solutions of Ince's equation. {\it Analysis} {\bf 23} (2003), 97-105.
\bibitem{YangYau} P. C. Yang, S.-T. Yau, Eigenvalues of the laplacian of compact Riemann surfaces and minimal submanifolds. \textit{Ann. Scuola Norm. Sup. Pisa Cl. Sci.} \textbf{7}:1 (1980), 55-63.

\end{thebibliography}
\end{document}